\documentclass{amsart}

\makeatletter
\@namedef{subjclassname@2020}{%
  \textup{2020} Mathematics Subject Classification}
\makeatother 

\usepackage{amsthm,amssymb,amsfonts,latexsym,mathtools,thmtools}
\usepackage[T1]{fontenc}
\usepackage{tikz-cd} 
\usepackage{enumitem} 
\usepackage{hyperref} 
\usepackage{hyperref}
\hypersetup{
    colorlinks=true,
    linkcolor=blue,
    filecolor=blue,      
    urlcolor=cyan,
    linktocpage=true
}

\newtheorem{theorem}{Theorem}[section]
\newtheorem{lemma}[theorem]{Lemma}

\theoremstyle{definition}
\newtheorem{definition}[theorem]{Definition}

\theoremstyle{remark}
\newtheorem{remark}[theorem]{Remark}

\newtheorem{proposition}[theorem]{Proposition}
\newtheorem{corollary}[theorem]{Corollary}


\numberwithin{equation}{section}

\begin{document}

\title[Differential smoothness of 3-dimensional and diffusion algebras]{On the differential smoothness of 3-dimensional skew polynomial algebras and diffusion algebras}



\author{Armando Reyes}
\address{Universidad Nacional de Colombia - Sede Bogot\'a}
\curraddr{Campus Universitario}
\email{mareyesv@unal.edu.co}
\thanks{}


\author{Cristian Sarmiento}
\address{Universidad Nacional de Colombia - Sede Bogot\'a}
\curraddr{Campus Universitario}
\email{cdsarmientos@unal.edu.co}

\thanks{The authors were supported by the research fund of Faculty of Science, Code HERMES 52464, Universidad Nacional de Colombia - Sede Bogot\'a, Colombia.}

\subjclass[2020]{16S30, 16S36, 16S37, 16S38, 58B32}

\keywords{Differentially smooth algebra \and integrable calculus, 3-dimensional skew polynomial algebra, diffusion algebra}

\date{}

\dedicatory{Dedicated to professor Oswaldo Lezama for his brilliant academic career at the \\ Universidad Nacional de Colombia - Sede Bogot\'a}

\begin{abstract}

In this paper, we study the differential smoothness of 3-dimensional skew polynomial algebras and diffusion algebras.

\end{abstract}

\maketitle


\section{Introduction}\label{introduction}


\label{intro}

Brzezi{\'n}ski \cite{Bre15} investigated the construction of an algebraic differential structure of an affine algebra $\mathcal{A}$ by defining a notion of smoothness over the algebra $\mathcal{A}$. He considered the existence of two complexes of $\mathcal{A}$-modules related through an isomorphism which is analogous to the star isomorphism of Hodge's theory, as it was obtained for ${\rm M}_{n}(\mathbb{C})$ (the set of matrices of size $n$ times $n$ with entries in the set of complex numbers $\mathbb{C}$) in the work of Dubois et al., \cite{DVKM90} (this isomorphism motivates the features of the definition of integrable differential calculus), where the dimensionality of these structures is conditioned by the Gelfand-Kirillov dimension of $\mathcal{A}$.

In this way, Brzezi{\'n}ski et al., \cite{Brz16} and \cite{BKL10}, studied the notion of twisted multiderivation $(\sigma,\partial)$, where $\sigma:\mathcal{A}\rightarrow {\rm M}_{n}(\mathcal{A})$ is an automorphism of algebras and $\partial:\mathcal{A}\rightarrow\mathcal{A}^{n}$ is a $\mathbb{K}$-linear function ($\mathbb{K}$ a field) such that $\partial(ab)=\partial(a)\sigma(b) +a\partial(b)$, for all $a, b\in\mathcal{A}$, with the purpose of introduce a first-order differential calculus $(\Omega^{1}(\mathcal{A}),d)$ (\cite{Brz16}, Definition 4.1), accompanied by a divergence function, also called a hom-connection $\nabla$, that is, a $\mathbb{K}$-linear function $\nabla:\text{Hom}_{\mathcal{A}}(\Omega^{1}(\mathcal{A}),\mathcal{A})\rightarrow\mathcal{A}$ that satisfies $\nabla(fa)=\nabla(f)a+f(d(a))$, for all $f\in\text{Hom}_{\mathcal{A}}(\Omega^{1}(\mathcal{A}),\mathcal{A})$ and $a\in\mathcal{A}$. Using a set of skew derivations relative to automorphisms of $\mathcal{A}$, Brzezi{\'n}ski et al., \cite{BKL10} built a twisted multiderivation, where $\sigma\in {\rm M}_{n}(\text{End}(\mathcal{A}))$ is a diagonal matrix (\cite{BKL10}, p. 292). This idea allowed to Brzezi{\'n}ski \cite{Brz16} to find examples of these first-order differential calculus and hom-connections for generalized Weyl algebras of degree one (introduced by Bavula \cite{Bav92}), and skew polynomial rings (defined by Ore \cite{Ore1933}) on $\mathbb{C}[x]$, see Brzezi{\'n}ski \cite{Bre15}. It is important to say that using a hom-connection $\nabla$, we obtain a notion of first-order integral calculus (Brzezi{\'n}ski \cite{Brz16}, Definition 4.6), and a resolution of $\mathcal{A}$-modules called the complex of integral forms ($\mathcal{I}\mathcal{A},\nabla$), extending $\nabla$ (Brzezi{\'n}ski \cite{BKL10}). The existence of a complex isomorphism between ($\mathcal{I}\mathcal{A},\nabla$) and $(\Omega(\mathcal{A}),d)$ determines the differential smoothness of $\mathcal{A}$ (Brzezi{\'n}ski \cite{BS14}). Therefore, it is interesting to study the conditions of the twisted derivations in order to construct the Brzezi{\'n}ski's differential calculus and then study the property of being differentially smooth, at least for affine algebras in a field.

With all the above facts in mind and following the ideas developed by Brzezi{\'n}ski, our purpose in this article is to study the differential smoothness of noncommutative algebras of polynomial type known as {\em 3-dimensional skew polynomial algebras} defined by Bell and Smith \cite{BellSmith1990}, and {\em diffusion algebras} introduced by Isaev et al., \cite{IPR01} (cf. Pyatov and Twarock \cite{PT02}). 

The article is organized as follows. Section \ref{sec:3} contains the study of the differential smoothness of 3-dimensional skew polynomial algebras. Next, Section \ref{sec:5} presents several results about the  differential smoothness of diffusion algebras. Finally, we present some ideas for a possible future work.

Throughout the paper, $\mathbb{N}$ denotes the set of natural numbers including zero, and $K$ and $\mathbb{K}$ denote a commutative ring with identity and a field, respectively. If $S$ is a set, $|S|$ denotes its number of elements. The word ring means an associative ring with identity not necessarily commutative.

\section{Differential smoothness of 3-dimensional skew polynomial algebras}
\label{sec:3}

We start establishing some preliminary definitions and results about differential calculus.

Following Gianchetta et al., \cite{GMS05}, Section 1.6, a {\em graded algebra} $\Omega^{*}$ over $K$ is defined as a direct sum $\Omega^{*} = \bigoplus_{k\in \mathbb{N}} \Omega^{k}$ of $k$-modules, provided with an associative multiplication law $a \wedge b$, $a, b \in \Omega^{*}$, such that $a \wedge b\in \Omega^{|a| + |b|}$, where $|a|$ denotes the degree of an element $a\in \Omega^{|a|}$. Note that $\Omega^{0}$ is a (noncommutative) $K$-algebra $\mathcal{A}$, while $\Omega^{k > 0}$ are $\mathcal{A}$-bimodules and $\Omega^{*}$ is an $(\mathcal{A}-\mathcal{A})$-algebra. A graded algebra $\Omega^{*}$ is said to be {\em graded commutative} if $a\wedge b = (-1)^{|a||b|} b\wedge a$, for elements $a, b \in \Omega^{*}$.

A graded algebra $\Omega^*=\bigoplus_{k\in \mathbb{N}} \Omega^k$, where $\Omega^0\cong \mathcal{A}$, is said to be a \textit{differential calculus} over $\mathcal{A}$ if it is a cochain complex of $\mathbb{K}$-modules (also known as {\em de Rham complex} of the differential graded algebra $(\Omega^{*},d)$)
\begin{equation*}
    0\rightarrow\mathbb{K}\rightarrow \mathcal{A} \xrightarrow{d} \Omega^1\xrightarrow{d} \cdots \Omega^k\xrightarrow{d}\cdots
\end{equation*}
with respect to a coboundary operator $d$ which obeys the {\em graded Leibniz rule} $d(a\wedge b)=d(a)\wedge b + (-1)^{|{\rm deg}(a)|} a\wedge d(b)$, for all pair of homogeneous elements $a,b\in\Omega$. In particular, $d:\mathcal{A}\to \Omega^{1}$ is a $\Omega^{1}$-valued derivation of a $\mathbb{K}$-algebra $\mathcal{A}$.

The following definition gives the general features of differential calculi that we study in this paper.

\begin{definition}(\cite{BS14}, p. 416)\label{dimensioncalculus}
We say that a differential calculus $(\Omega\mathcal{A},d)$, where $\Omega\mathcal{A}=\bigoplus_{i\in\mathbb{N}}\Omega^{i}\mathcal{A}$, with $\Omega^{i}\mathcal{A}=\bigwedge_{j=1}^{i}\Omega^{1}\mathcal{A}$ and $\Omega^{0}\mathcal{A}=\mathcal{A}$, is of \textit{dimension} $n$, if $\Omega^{n}\mathcal{A}\not=0$, and $\Omega^{m}=0$, for all $n<m$. We say that $\Omega^{n}\mathcal{A}$ \textit{admits a volume form} if $\Omega^{n}\mathcal{A}$ is isomorphic to $\mathcal{A}$ as a right and left module (not necessary as a bimodule). If this is the case, and $\omega$ is a right generator of $\Omega^{n}\mathcal{A}$, we say that $\omega\in \Omega^{n}\mathcal{A}$ is a \textit{volume form}.
We call a differential calculus $(\Omega,d)$ \textit{connected} if $\text{Ker}(d|_{\Omega^{0}})=\mathbb{K}$.  
\end{definition}

For a differential calculus that admits a volume form, we define for each right generator $\omega\in\Omega^{n}\mathcal{A}$, the {\em algebra automorphism} $\nu_{\omega}:\mathcal{A}\rightarrow\mathcal{A}$ such that for each $a\in\mathcal{A}$, $a\omega=\omega\nu_{\omega}(a)$, and the $\mathcal{A}$-module isomorphism $\pi_{\omega}:\Omega^{n}(\mathcal{A})\to \mathcal{A}$, such that for all $a\in \mathcal{A}$ we have $\pi_{\omega}(\omega a)=a$.

The following definition of an \textit{integrable differential calculus} seeks to portray a version of Hodge star isomorphisms between the complex of differential forms of a differentiable manifold and a complex of dual modules of it (Brzezi{\'n}ski \cite{Bre15}, p. 112).

\begin{definition}(\cite{BS14}, Definition 2.1)
An $n$-dimensional differential calculus $\Omega A$ is said to be \textit{integrable} if $\Omega A$ admits a complex of integral forms $(\mathcal{I}A,\nabla)$ (\cite{BKL10}, Section 2), where $\mathcal{I}_{i}\mathcal{A}=\text{Hom}_{\mathcal{A}}(\Omega^{i}(\mathcal{A}),\mathcal{A})$, for $0\leq i\leq n$, and for which there exist an algebra automorphism $\nu$ of $\mathcal{A}$ and $\mathcal{A}$-bimodule isomorphism $\Theta_{k}:\Omega^{k}\mathcal{A}\rightarrow \mathcal{I}_{n-k}\mathcal{A}$, $k=0,\dotsc ,n$, such that for each $k$, we have the commutative diagram given by 
\begin{center}
\begin{tikzcd}
\Omega^{k}\mathcal{A} \arrow[r, "d"]\arrow[d, "\Theta_{k}"]& \Omega^{k+1}\mathcal{A}\arrow[d, "\Theta_{k+1}"]\\
\mathcal{I}_{n-k}\mathcal{A}\arrow[r, "\nabla_{k}"] & \mathcal{I}_{n-(k+1)}\mathcal{A}
\end{tikzcd}
\end{center}
\end{definition}

The following result allows us to guarantee the integrability of a differential calculus $\Omega(\mathcal{A})$ by considering the existence of finitely generator elements that allow to determine left and right components of any homogeneous element of $\Omega(\mathcal{A})$, using the volume isomorphisms $\nu_{\omega}$ and $\pi_{\omega}$. 

\begin{lemma}{\rm (\cite{BS14}, Lemma 2.7)}\label{chayanne} Let $\Omega(\mathcal{A})$ be an $n$-dimensional calculus over $\mathcal{A}$ admitting a volume form $\omega$. Assume that for all $k=1,\dotsc,n-1$, there exists a finite number of forms $\omega_{i}^{k},\overline{\omega}_{i}^{k}\in \Omega^{k}(\mathcal{A})$ such that for all $\omega'\in \Omega^{k}(\mathcal{A})$,
\begin{align}
    \omega'=\sum_{i}\omega^{k}_{i}\pi_{\omega}(\overline{\omega}_{i}^{n-k}\wedge \omega')=\sum_{i}\nu_{\omega}^{-1}( \pi_{\omega}(\omega'\wedge \omega_{i}^{n-k}))\overline{\omega}_{i}^{k}. \label{integrablelem}
\end{align}
Then $\omega$ is an integral form and all the $\Omega^{k}(\mathcal{A})$ are finitely generated and projective as left and right $\mathcal{A}$-modules.
\end{lemma}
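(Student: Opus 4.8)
The plan is to read the two identities in \eqref{integrablelem} as \emph{dual basis} decompositions, one for the right and one for the left $\mathcal{A}$-module structure of each $\Omega^{k}(\mathcal{A})$, and then to use the volume form to manufacture the Hodge-type isomorphisms $\Theta_{k}$ demanded by the definition of an integrable calculus. Two elementary facts drive everything: that $\pi_{\omega}$ is right $\mathcal{A}$-linear, and that $\pi_{\omega}(a\eta)=\nu_{\omega}(a)\pi_{\omega}(\eta)$, which is immediate from $a\omega=\omega\nu_{\omega}(a)$ and the freeness of $\Omega^{n}(\mathcal{A})$ on the right.

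First I would settle the projectivity statement, which I expect to be the direct part. Fix $k\in\{1,\dots,n-1\}$ and define, for each $i$, maps $f_{i},g_{i}:\Omega^{k}(\mathcal{A})\to\mathcal{A}$ by $f_{i}(\omega')=\pi_{\omega}(\overline{\omega}_{i}^{n-k}\wedge\omega')$ and $g_{i}(\omega')=\nu_{\omega}^{-1}(\pi_{\omega}(\omega'\wedge\omega_{i}^{n-k}))$. Since $\pi_{\omega}$ is right $\mathcal{A}$-linear and the right action of $\Omega^{0}=\mathcal{A}$ passes through $\wedge$ by associativity, $f_{i}$ is right $\mathcal{A}$-linear; and since $\pi_{\omega}(a\eta)=\nu_{\omega}(a)\pi_{\omega}(\eta)$ while $\nu_{\omega}^{-1}$ is an algebra homomorphism, a short computation gives $g_{i}(a\omega')=a\,g_{i}(\omega')$, so $g_{i}$ is left $\mathcal{A}$-linear. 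With this, the first identity of \eqref{integrablelem} reads $\omega'=\sum_{i}\omega_{i}^{k}\,f_{i}(\omega')$ and the second reads $\omega'=\sum_{i}g_{i}(\omega')\,\overline{\omega}_{i}^{k}$, which are precisely the hypotheses of the dual basis lemma for the right, respectively left, module structure. Hence each $\Omega^{k}(\mathcal{A})$, $1\le k\le n-1$, is finitely generated projective on both sides; the remaining degrees are immediate, as $\Omega^{0}(\mathcal{A})=\mathcal{A}$ is free and $\Omega^{n}(\mathcal{A})\cong\mathcal{A}$ on each side because $\omega$ is a volume form.

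Next I would address integrability by producing the isomorphisms $\Theta_{k}:\Omega^{k}(\mathcal{A})\to\mathcal{I}_{n-k}\mathcal{A}=\text{Hom}_{\mathcal{A}}(\Omega^{n-k}(\mathcal{A}),\mathcal{A})$ via $\Theta_{k}(\omega')(\eta)=\pi_{\omega}(\omega'\wedge\eta)$. The same two facts show that $\Theta_{k}(\omega')$ is right $\mathcal{A}$-linear, that $\Theta_{k}$ is right $\mathcal{A}$-linear, and that $\Theta_{k}(a\omega')=\nu_{\omega}(a)\,\Theta_{k}(\omega')$, so $\Theta_{k}$ is the $\nu_{\omega}$-twisted bimodule map allowed by the definition. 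Bijectivity is where \eqref{integrablelem} does the real work: if $\Theta_{k}(\omega')=0$ then $\pi_{\omega}(\omega'\wedge\omega_{i}^{n-k})=0$ for every $i$, so the second identity forces $\omega'=0$, giving injectivity; for surjectivity, given $\phi\in\mathcal{I}_{n-k}\mathcal{A}$ I would decompose an arbitrary $\eta\in\Omega^{n-k}(\mathcal{A})$ by the first identity in degree $n-k$, push $\phi$ through by right $\mathcal{A}$-linearity, and re-collect the scalars inside $\pi_{\omega}$ using $a\,\pi_{\omega}(\zeta)=\pi_{\omega}(\nu_{\omega}^{-1}(a)\zeta)$, which realizes $\phi$ as $\Theta_{k}(\omega')$ with $\omega'=\sum_{j}\nu_{\omega}^{-1}(\phi(\omega_{j}^{n-k}))\,\overline{\omega}_{j}^{k}$.

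The step I expect to be the main obstacle is the compatibility of $\Theta$ with the differentials, that is, proving $\Theta_{k+1}\circ d=\nabla_{k}\circ\Theta_{k}$ so that the defining square commutes. This is the genuinely nontrivial point, since it forces the de Rham differential $d$, governed by the graded Leibniz rule, to match the hom-connection $\nabla_{k}$ on the complex of integral forms of \cite{BKL10}; here one must unwind the defining property $\nabla(fa)=\nabla(f)a+f(d(a))$ against $\Theta$ and repeatedly invoke \eqref{integrablelem} to reduce an arbitrary form to the generators $\omega_{i}^{k},\overline{\omega}_{i}^{k}$. Once this commutativity is in place, the bijective $\Theta_{k}$ exhibit $(\mathcal{I}\mathcal{A},\nabla)$ as isomorphic to $(\Omega\mathcal{A},d)$, so the calculus is integrable and $\omega$ is an integral form.
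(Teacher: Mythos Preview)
The paper does not contain a proof of this lemma: it is quoted verbatim from \cite{BS14}, Lemma 2.7, and used as a black box (to verify integrability in Theorem \ref{Theorem3d} and in the subsequent remark). There is therefore no ``paper's own proof'' to compare your proposal against.

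That said, your sketch is the correct one and matches the argument in the original reference. The dual-basis reading of \eqref{integrablelem} yields finite generation and projectivity exactly as you describe, and the Hodge-type maps $\Theta_{k}(\omega')(\eta)=\pi_{\omega}(\omega'\wedge\eta)$ are the intended isomorphisms. Your identification of the square $\Theta_{k+1}\circ d=\nabla_{k}\circ\Theta_{k}$ as the nontrivial step is also accurate; in \cite{BS14} this is handled by taking the hom-connection $\nabla$ to be the one canonically induced by $\omega$ (so that $\nabla_{n-1}(\phi)=\pi_{\omega}(d\,\Theta_{0}^{-1}(\phi))$ essentially by definition), after which the graded Leibniz rule together with the dual-basis identities \eqref{integrablelem} propagates the commutativity through the remaining degrees. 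You have not written that step out, but you have correctly located where the work lies.
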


Now, we recall the key notion for the paper.
\begin{definition}(\cite{BS14}, p. 421)\label{deffsmooth}
An affine algebra $R$ of integer Gelfand-Kirillov dimension $m$ is said to be \textit{differentially smooth} if there exists a connected, $m$-dimensional, integrable differential calculus on $R$.
\end{definition}

About the differential smoothness, from Brzezi{\'n}ski \cite{Bre15} and \cite{BL18} we know that this property holds for some tensor products and skew polynomial rings (introduced by Ore \cite{Ore1933}) of automorphism type $R[y;\sigma,\delta]$ over the polynomial ring $R=\mathbb{C}[x]$ with a non-trivial derivation $\delta$ over $R$. Precisely, related to these objects, we recall below the first family of noncommutative rings that interest us in this work.
\begin{definition}[\cite{BellSmith1990}; \cite{Rosenberg1995}, Definition C4.3]\label{monito}
A $3$-{\em dimensional algebra} $\mathcal{A}$ is a $\mathbb{K}$-algebra generated by the indeterminates $x, y, z$ subject to the relations $yz-\alpha zy=\lambda$, $zx-\beta xz=\mu$, and $xy-\gamma yx=\nu$, where $\lambda,\mu,\nu \in \mathbb{K}x+\mathbb{K}y+\mathbb{K}z+\mathbb{K}$, and $\alpha, \beta, \gamma \in \mathbb{K}^{*}$. $\mathcal{A}$ is called a \textit{3-dimensional skew polynomial $\mathbb{K}$-algebra} if the set $\{x^iy^jz^k\mid i,j,k\geq 0\}$ forms a $\mathbb{K}$-basis of the algebra.
\end{definition}

Different authors have studied ring-theoretical and computational properties of 3-dimensional skew polynomial algebras (see \cite{LFGRSV}, \cite{GoodearlLetzter1994}, \cite{Jordan1993}, \cite{Jordan1995}, \cite{JordanWells1996},  \cite{LarssonSilvestrov2007}, \cite{Smith1991},  \cite{RedmanPhD1996}, \cite{Redman1999}, and \cite{ReyesSuarez2017a}). Next, we recall the classification of these algebras.

\begin{proposition}[\cite{Rosenberg1995}, Theorem C4.3.1]\label{quasipolynomial}
 Up to isomorphism, a 3-dimensional skew polynomial $\mathbb{K}$-algebra $\mathcal{A}$ is given by the following relations:
\begin{enumerate}
    \item [\rm (1)] If $|\{\alpha,\beta,\gamma\}|=3$, then $\mathcal{A}$ is defined by the relations $yz-\alpha zy=0$, $zx-\beta xz=0$, and $xy-\gamma yx=0$.
    \item [\rm (2)] If $|\{\alpha,\beta,\gamma\}|=2$ and $\beta\not=\alpha=\gamma=1$, then $\mathcal{A}$ is defined by one of the following rules:
    \begin{enumerate}
        \item [\rm (a)] $yz-zy=z$, $zx-\beta xz=y$, and $xy-yx=x$.
        \item [\rm (b)] $yz-zy=z$, $zx-\beta xz=b$, and $xy-yx=x$.
        \item [\rm (c)] $yz-zy=0$, $zx-\beta xz=y$, and $xy-yx=0$.
        \item [\rm (d)] $yz-zy=0$, $zx-\beta xz=b$, and $xy-yx=0$.
        \item [\rm (e)] $yz-zy=az$, $zx-\beta xz=0$, and $xy-yx=x$.
        \item [\rm (f)] $yz-zy=z$, $zx-\beta xz=0$, and $xy-yx=0$.
    \end{enumerate}
    Here $a,b\in \mathbb{K}$ are arbitrary; all nonzero values of $b$ yield isomorphic algebras.
    \item [\rm (3)] If $\alpha=\beta=\gamma\not=1$, and if $\beta\not=\alpha=\gamma\not= 1$, then $\mathcal{A}$ is one of the following algebras:
    \begin{enumerate}
        \item [\rm (a)] $yz-\alpha zy=0$, $zx-\beta xz=y+b$, and $xy-\alpha yx=0$.
        \item [\rm (b)] $yz-\alpha zy=0$, $zx-\beta xz=b$, and $xy-\alpha yx=0$.
    \end{enumerate}
    Here $a,b\in \mathbb{K}$ is arbitrary; all nonzero values of $b$ yield isomorphic algebras.
    \item [\rm (4)] If $\alpha=\beta=\gamma\not=1$, then $\mathcal{A}$ is determined by the relations $yz-\alpha zy= a_1 x + b_1$, $zx-\alpha xz= a_2 y + b_2$, and $xy-\alpha yx= a_3 z + b_3$.

    If $a_i=0$, then all nonzero values of $b_i$ yield isomorphic algebras.
    \item [\rm (5)] If $\alpha=\beta=\gamma=1$, then $\mathcal{A}$ is isomorphic to one of the following algebras:
    \begin{enumerate}
        \item [\rm (a)] $yz-zy=x$, $zx-xz=y$, and $xy-yx=z$.
        \item [\rm (b)] $yz-zy=0$, $zx-xz=0$, and $xy-yx=z$.
        \item [\rm (c)] $yz-zy=0$, $zx-xz=0$, and $xy-yx=b$.
        \item [\rm (d)] $yz-zy=-y$, $zx-xz=x+y$, and $xy-yx=0$.
        \item [\rm (e)] $yz-zy=az$, $zx-xz=x$, and $xy-yx=0$.
    \end{enumerate}
    Here $a,b\in\mathbb{K}$ are arbitrary; all nonzero values of $b$ yield isomorphic algebras.
\end{enumerate}
\end{proposition}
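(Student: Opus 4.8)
The plan is to combine a Diamond-Lemma analysis of the defining relations with a systematic normalization by algebra isomorphisms. First I would fix the degree-lexicographic order induced by $z>y>x$ and read the three defining relations as the rewriting system
\[
zy \longmapsto \alpha^{-1} yz - \alpha^{-1}\lambda, \qquad zx \longmapsto \beta xz + \mu, \qquad yx \longmapsto \gamma^{-1} xy - \gamma^{-1}\nu,
\]
whose irreducible words are exactly the ordered monomials $x^{i}y^{j}z^{k}$. Since the hypothesis that $\{x^{i}y^{j}z^{k}\}$ is a $\mathbb{K}$-basis is precisely the confluence of this system, Bergman's Diamond Lemma tells me that the only obstruction to resolve is the single overlap ambiguity on the word $zyx$ (the leading monomials $zy,zx,yx$ overlap only there).

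The central computation is therefore to reduce $zyx$ in the two admissible orders, namely resolving $(zy)x$ first versus $z(yx)$ first, and to equate the two results in the free module on the basis $x^{i}y^{j}z^{k}$. Because scalars commute, the two reductions automatically agree on the top monomial $xyz$ (both produce the coefficient $\alpha^{-1}\beta\gamma^{-1}$), so the genuine content lives entirely in the degree-one and constant terms. Collecting the coefficients of $x,y,z$ and of the scalar term yields a finite system of polynomial identities that \emph{couple} the multiplicative parameters $\alpha,\beta,\gamma$ with the affine-linear coefficients of $\lambda,\mu,\nu$; it is precisely these identities that dictate which linear and constant contributions may survive for a given choice of $\alpha,\beta,\gamma$ (heuristically, generic/distinct parameters force homogeneity, while coincidences among $\alpha,\beta,\gamma$ open room for nonzero lower-order terms).

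With the constraints in hand, I would organize the classification exactly along the stated dichotomy: the cardinality of $\{\alpha,\beta,\gamma\}$ together with whether a repeated value equals $1$. This gives the branches $|\{\alpha,\beta,\gamma\}|=3$, the mixed case $\beta\neq\alpha=\gamma=1$, the cases in which the repeated parameter differs from $1$, the fully equal case $\alpha=\beta=\gamma\neq 1$, and finally $\alpha=\beta=\gamma=1$. In each branch I would exhaust the remaining freedom by applying isomorphisms: rescalings $x\mapsto c_{1}x$, $y\mapsto c_{2}y$, $z\mapsto c_{3}z$ to normalize nonzero coefficients to $1$; affine translations $x\mapsto x+s$ (and cyclically) to absorb or eliminate constants; and, where two of $\alpha,\beta,\gamma$ coincide, linear substitutions mixing the corresponding generators to clear or standardize the linear parts of $\lambda,\mu,\nu$. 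This is what produces the explicit normal forms (1)--(5) and the accompanying statement that all nonzero values of $b$ (respectively $b_{i}$) yield isomorphic algebras.

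The main obstacle I anticipate is twofold. The bookkeeping in the overlap reduction is delicate, since the constant and linear terms of $\lambda,\mu,\nu$ interact nontrivially when commuted past a generator (each such commutation triggers further applications of the three rules), so one must track these carefully to extract the correct lower-order constraints rather than only the trivial leading relation. Second, the case analysis must be shown to be both exhaustive and non-redundant: one has to verify that the proposed substitutions are genuine isomorphisms preserving the PBW basis, and that no further reduction collapses two of the listed forms, which amounts to checking that the chosen invariants---the multiset $\{\alpha,\beta,\gamma\}$ together with the qualitative shape of the admissible $\lambda,\mu,\nu$---really distinguish the families.
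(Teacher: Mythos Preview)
The paper does not supply a proof of this proposition: it is quoted verbatim as a classification result from Rosenberg's book \cite{Rosenberg1995}, Theorem C4.3.1, and is used only as background input for the subsequent differential-smoothness analysis. There is therefore no argument in the paper to compare your proposal against.

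That said, your plan is the standard and correct strategy for establishing such a result. Reading the three relations as a rewriting system for the order $z>y>x$, the Diamond Lemma does reduce the PBW condition to the single overlap on $zyx$, and resolving it yields exactly the compatibility constraints among $\alpha,\beta,\gamma$ and the linear/constant parts of $\lambda,\mu,\nu$ that drive the case split. Your normalization step (diagonal rescalings, affine shifts, and---when two of $\alpha,\beta,\gamma$ coincide---linear substitutions mixing the corresponding generators) is precisely what collapses the parameter families to the listed normal forms and justifies the ``all nonzero values of $b$ yield isomorphic algebras'' clauses.

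Two cautions if you carry this out in full. First, the overlap computation is more intricate than it looks: since $\lambda,\mu,\nu\in\mathbb{K}x+\mathbb{K}y+\mathbb{K}z+\mathbb{K}$, each reduction of $(zy)x$ and $z(yx)$ spawns secondary reductions when the linear part of $\lambda$ (respectively $\nu$) is commuted past $x$ (respectively $z$), so the resulting constraints mix all nine linear coefficients nontrivially; this is where the qualitative shapes in cases (2)--(5) come from. Second, to show the list is \emph{irredundant} you need more than the multiset $\{\alpha,\beta,\gamma\}$ as an invariant: within a fixed branch (e.g.\ (2)(a)--(2)(f) or (5)(a)--(5)(e)) one must produce finer isomorphism invariants---typically the associated graded algebra, the center, or the structure of the derived quotient---to separate the subcases. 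Your outline acknowledges this but does not yet name the invariants you would use.
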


Now, we are going to formulate the first result of the article, Theorem 1. This allows us to study the differential smoothness of some 3-dimensional skew polynomial algebras.

\begin{theorem}\label{Theorem3d}
Let $\mathcal{A}$ be the $\mathbb{K}$-algebra generated by the indeterminates $x, y$ and $z$ subject to the relations given by
\begin{align*}
    yz-z(\alpha y+a)=0, \ \ \ zx-\beta xz = b,\ \ \  xy-(\gamma y+d)x=0, 
\end{align*}
where $\gamma, \alpha,\beta\in \mathbb{K}\setminus \{0\}$, $b\in \mathbb{K}$, such that {\rm (i)} $d=a=0$ or {\rm (ii)} $\alpha=\gamma=1$. If ${\rm GKdim}(\mathcal{A})=3$, then $\mathcal{A}$ is differentially smooth. 
\end{theorem}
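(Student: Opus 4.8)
The plan is to verify Definition \ref{deffsmooth} directly by constructing a connected, $3$-dimensional, integrable differential calculus $(\Omega\mathcal{A},d)$ on $\mathcal{A}$ and then invoking Lemma \ref{chayanne} to upgrade it to an integrable one. Since ${\rm GKdim}(\mathcal{A})=3$, the hypothesis forces $\mathcal{A}$ to be a genuine $3$-dimensional skew polynomial algebra in the sense of Definition \ref{monito}, so the ordered monomials $\{x^iy^jz^k\}$ form a $\mathbb{K}$-basis and $\mathcal{A}$ is an iterated Ore extension. This basis is what lets me define $\mathbb{K}$-linear maps on $\mathcal{A}$ by prescribing them on the generators and extending.

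First I would build the first-order calculus. Let $\Omega^1\mathcal{A}$ be the free right $\mathcal{A}$-module with basis $\{dx,dy,dz\}$, set $d(x)=dx$, $d(y)=dy$, $d(z)=dz$, and extend $d$ by the Leibniz rule; the left $\mathcal{A}$-module structure I would encode through three automorphisms $\sigma_x,\sigma_y,\sigma_z$ of $\mathcal{A}$ via $a\,dx=dx\,\sigma_x(a)$, $a\,dy=dy\,\sigma_y(a)$, and $a\,dz=dz\,\sigma_z(a)$. Requiring that $d$ annihilate the three defining relations and collecting the coefficients of the free basis $\{dx,dy,dz\}$ forces the values of each $\sigma_i$ on the generators; for instance the relation $yz-z(\alpha y+a)=0$ yields $\sigma_y(z)=\alpha^{-1}z$ and $\sigma_z(y)=\alpha y+a$, and the other two relations determine $\sigma_x(y),\sigma_x(z),\sigma_y(x),\sigma_z(x)$ analogously, while $\sigma_x(x),\sigma_y(y),\sigma_z(z)$ remain as scalar parameters that I would fix to make each $\sigma_i$ a relation-preserving bijection.

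The main obstacle is precisely to check that these prescriptions extend to genuine \emph{algebra automorphisms}, i.e. that each $\sigma_i$ respects all three defining relations simultaneously. A direct substitution into the relation $yz-\alpha zy-az=0$ under $\sigma_x$ would produce an obstruction term of the form $\beta\bigl(a(\gamma^{-1}-1)+\gamma^{-1}d(\alpha-1)\bigr)z$, which vanishes exactly when $a=d=0$ (case (i)) or $\alpha=\gamma=1$ (case (ii)). Thus the two alternatives in the hypothesis are precisely the compatibility conditions that make the bimodule $\Omega^1\mathcal{A}$ well defined, and I expect the analogous checks for the remaining relations to impose the same two conditions; verifying all of them is the technical core of the argument.

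With $\Omega^1\mathcal{A}$ in hand I would extend to the full graded algebra by declaring $dx\wedge dx=dy\wedge dy=dz\wedge dz=0$ together with skew-commutation rules among $dx,dy,dz$ whose scalar coefficients are dictated by the same automorphisms, so that $\Omega^k\mathcal{A}$ is free of rank $\binom{3}{k}$ on each side. Then $\Omega^3\mathcal{A}$ is free of rank one with candidate volume form $\omega=dx\wedge dy\wedge dz$, one has $\Omega^m\mathcal{A}=0$ for $m>3$, and the calculus has dimension $3$, with $\nu_\omega$ read off from the composite of the $\sigma_i$. Connectedness, $\ker(d|_{\mathcal{A}})=\mathbb{K}$, follows since $d(a)=\partial_x(a)\,dx+\partial_y(a)\,dy+\partial_z(a)\,dz$ with $\{dx,dy,dz\}$ free, so $d(a)=0$ forces the three skew-derivations to vanish on $a$, and inspecting the action on the leading monomial of the PBW basis then gives $a\in\mathbb{K}$. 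Finally, to obtain integrability I would apply Lemma \ref{chayanne} with the explicit finite families $\omega_i^k,\overline{\omega}_i^k$ taken to be the basis monomials of $\Omega^k\mathcal{A}$ for $k=1,2$, and verify the two expansions in \eqref{integrablelem} by a direct computation using $\pi_\omega$ and $\nu_\omega$. This produces a connected, $3$-dimensional, integrable calculus, so by Definition \ref{deffsmooth} the algebra $\mathcal{A}$ is differentially smooth.
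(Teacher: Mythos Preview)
Your proposal follows essentially the same route as the paper: build $\Omega^1\mathcal{A}$ as a free right module with left action twisted by three automorphisms $\sigma_x,\sigma_y,\sigma_z$ determined by the relations, pass to the exterior powers to get a rank-one $\Omega^3\mathcal{A}$ with volume form $dx\wedge dy\wedge dz$, verify connectedness on the PBW basis, and invoke Lemma~\ref{chayanne} with the evident basis forms as $\omega_i^k,\overline{\omega}_i^k$. The one point you should make explicit is that the $\sigma_i$ must \emph{pairwise commute}, not merely each be an algebra automorphism: commutativity is what ensures the left $\mathcal{A}$-action on $\Omega^2\mathcal{A}$ is unambiguous (moving $a$ past $dx\wedge dy$ in two ways must agree), so that no extra relations collapse the ranks $\binom{3}{k}$, and this is a second place where the dichotomy (i)/(ii) is used---the paper verifies both the automorphism and the commutativity conditions simultaneously before writing down the explicit $\omega_i^k,\overline{\omega}_i^k$.
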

\begin{proof}
In both cases (i) and (ii), one can check that the automorphisms of $\mathcal{A}$ given by
 \begin{align*}
           \nu_{x}(x)&=\beta^{-1}x,\ \ \ \ \nu_{x}(y)=\gamma^{-1}y-d,\ \ \ \   \nu_{x}(z)=\beta z,\\ 
           \nu_{y}(x)&=\gamma x,\ \ \ \ \ \ \ \nu_{y}(y)=y,\ \ \ \ \ \ \  \ \ \ \ \ \ \ \nu_{y}(z)=\alpha^{-1}z,\\ 
           \nu_{z}(x)&=\beta^{-1}x,\ \ \ \  \nu_{z}(y)=\alpha y+a,\ \ \ \ \ \ \  \nu_{z}(z)=\beta z, 
        \end{align*}
commute with each other. Having in mind the left structure defined with these automorphisms $\nu_w,\ w = x, y, z$, over the right free $\mathcal{A}$-module $\Omega^{1}(\mathcal{A}):=d(x)\mathcal{A}\bigoplus d(y)\mathcal{A}$ $\bigoplus d(z)\mathcal{A}$, with left action defined, for all $a\in \mathcal{A}$, by $ad(w)=d(w)\nu_{w}(a)$, and since the extension of the assignments $w\mapsto d(w)$ to a derivative $d:\mathcal{A}\rightarrow \Omega^{1}(\mathcal{A})$ is well-defined by the definition of $\nu_{w}$, if $\nu_{w}(w):=\theta_{w}w$, then for all $i\in\mathbb{N} \setminus \{0\}$, one can assert that $d(w^{i})=d(w)(\sum_{j=0}^{i-1}\theta_{w}^{j})w^{i-1}$. Note that if $\theta_{w}=1$, then $d(w^{i})=d(w)\partial_{w}(w^{i})$. In this way, we have the well-defined $\mathbb{K}$-maps $\overline{\partial}_{w}:\mathcal{A}\rightarrow \mathcal{A}$, such that $d(a)=\sum_{w\in \{x,y,z\}}d(w)\overline{\partial}_{w}(a)$, where for all $x^{i}y^{j}z^{k}\in \mathcal{A}$,

\begin{align*}
    d(x^{i}y^{j}z^{k})&=d(x)\biggl(\sum_{t=0}^{i-1}\theta_{x}^{t}\biggr)x^{i-1}y^{j}z^{k}+d(y)\biggl(\sum_{t=0}^{j-1}\theta_{y}^{t}\biggr)\nu_{y}(x^{i})y^{j-1}z^{k}\\&+d(z)\biggl(\sum_{t=0}^{k-1}\theta_{z}^{t}\biggr)\nu_{z}(x^{i})\nu_{z}(y^{j})z^{k-1}.
\end{align*}

Note that $a\in {\rm Ker}(d)$ if and only if $a\in {\rm Ker}(\overline{\partial}_{x})\cap {\rm Ker}(\overline{\partial}_{y})\cap {\rm Ker}(\overline{\partial}_{z})=\mathbb{K}$, which shows that $(\Omega(\mathcal{A}),d)$ is connected, where $\Omega(\mathcal{A})=\bigoplus_{i=0}^{3}\Omega^{i}(\mathcal{A})$. In $\Omega^{2}(\mathcal{A})$ we get that, in both cases (i) and (ii), the following relations hold:
\begin{align*}
    d(y)\wedge d(x)&=-\gamma^{-1}d(x)\wedge d(y),\\ d(z)\wedge d(x)&=-\beta d(x)\wedge d(z),\\
    d(z)\wedge d(y)&=-\alpha^{-1}d(y)\wedge d(z),
\end{align*}
Since the automorphisms $\nu_w, w\in \{x, y, z\}$,  commute with each other, there are no additional relationships to the previous ones, so that $\Omega^{2}(\mathcal{A})=d(x)\wedge d(y)\mathcal{A}\bigoplus d(x)\wedge d(z)\mathcal{A}\bigoplus d(y)\wedge d(z)\mathcal{A}$. Thus, $\Omega^{3}=\omega \mathcal{A}\cong \mathcal{A}$ as a right and left $\mathcal{A}$-module, with $\omega:=d(x)\wedge d(y) \wedge d(z)$, where $\nu_{\omega}=\nu_{x}\circ \nu_{y}\circ \nu_{z}$, i.e., $\omega$ is a volume form of $\mathcal{A}$. From Lemma \ref{chayanne}, we get that $\omega$ is an integral form by setting
\begin{align*}
            \omega_{1}^{1}&=\overline{\omega}_{1}^{1}=d(x),\ \ \ \ \ \ \ \ \ \ \  \omega_{2}^{1}=\overline{\omega}_{2}^{1}=d(y), \ \ \ \ \ \ \ \ \ \ \ 
            \omega_{3}^{1}=\overline{\omega}_{3}^{1}=d(z)\\
            \omega_{1}^{2}&=d(y)\wedge d(z), \ \ \ \ \ \ \ \ \ \  \omega_{2}^{2}=- \gamma d(x)\wedge d(z), \ \ \ \ \ \omega_{3}^{2}=\alpha\beta^{-1}d(x)\wedge d(y),\\
            \overline{\omega}_{1}^{2}&=\gamma\beta^{-1} d(y)\wedge d(z),\ \ \ \overline{\omega}_{2}^{2}=-\alpha d(x)\wedge d(z), \ \ \ \ \ \overline{\omega}_{3}^{2}=d(x)\wedge d(y).
        \end{align*}
        Of course, this shows that $\mathcal{A}$ is differentially smooth.
\end{proof}

\begin{remark}\label{noDS}
Suppose that for some generators $x_{i},x_{j}$ and $x_{k}$ of an algebra $\mathcal{A}$ generated by the indeterminates $x_{1},\dotsc ,x_{n}$ one has the relationships given by $x_{i}x_{j}-ax_{j}x_{i}=bx_{i}+cx_{j}+fx_{k}+e$. If we have a first order differential calculus $(\Omega^{1},d)$ and $d$ is a well defined derivation $d:\mathcal{A}\rightarrow\Omega^{1}$, we get that 
 \begin{align*}
     d(x_{i})x_{j}+x_{i}d(x_{j})-ad(x_{j})x_{i}-ax_{j}d(x_{i})=bd(x_{i})+cd(x_{j})+fd(x_{k})+e,
 \end{align*}
whence $d(x_{k})$ is generated in the $\mathcal{A}$-bimodule by the elements $d(x_{i})$ and $d(x_{j})$. Since $\Omega^{1}$ is generated as $\mathcal{A}$-bimodule by $d(\mathcal{A})$, then $\Omega^{1}$ is generated by $n-1$ elements, and hence $\Omega^{n}=\bigwedge_{i=1}^{n} \Omega^{1}=0$. Besides, if $\text{GKdim}(\mathcal{A})=n$, we get that $\mathcal{A}$ cannot be differentially smooth because there is no a differential graded calculus of dimension $n$.  
\end{remark}

From Theorem \ref{Theorem3d}, we can  guarantee the differential smoothness of 3-dimensional skew polynomial $\mathbb{K}$-algebras appearing in $(1), (2)(\text{b}), (2)(\text{d}), (2)(\text{e})$, $(2)(\text{f})$, $(3)(\text{b})$ and $(5)(\text{c})$, and by other hand, Remark \ref{noDS} shows us that the algebras $(2)(\text{a})$, $(2)(\text{c})$, $(3)(\text{a})$, $(4)$ with $|a_{1}|+ |a_{2}|+|a_{3}|>0$, $(5)(\text{a})$, $(5)(\text{b})$ and $(5)(\text{d})$, are no differentially smooth. For the algebra $(5)$(c), though this algebra does not hold the conditions of Theorem \ref{Theorem3d}, we have that the automorphisms $\nu_{w}:={\rm id}_{\mathcal{A}}$ guarantee a structure in the sense of Theorem \ref{Theorem3d}. However, for the algebra (5)(e), we cannot conclude its differential smoothness using this reasoning because applying $d$ to $zx-x(z+1)=0$ we obtain the following facts:
\begin{align*}
    d(z)x+zd(x)-d(x)(z+1)-xd(z)&=0\\
    d(z)x+d(x)\nu_{x}(z)-d(x)(z+1)-d(z)\nu_{z}(x)&=0\\
    d(z)[x-\nu_{z}(x)]+d(x)[\nu_{x}(z)-(z+1)]&=0.
\end{align*}
Due to the right free structure of $\Omega^{1}$,  necessarily $\nu_{z}(x)=x$ and $\nu_{x}(z)=z+1$. In the same way, since $xy-yx=0$ then $\nu_{x}(y)=y$ and $\nu_{y}(x)=x$. Thus, $\nu_{x}$ respect $zy-(y+a)z=0$ if and only if $a=0$. In other words, if $a\not=0$, we cannot define a left structure on $\Omega^{1}(\mathcal{A})$ such that there exists a well-defined derivation $d:\mathcal{A}\rightarrow\Omega^{1}(\mathcal{A})$, which is our main trouble about the differential smoothness of algebra (5)(e). 

\subsection{Special $n$-dimensional skew polynomial algebras}\label{sec:4}

In the proof of Theorem \ref{Theorem3d} we saw that we need automorphisms $\nu_{x_{i}}$ of $\mathcal{A}$, for each generator $x_{i}$, such that the following conditions hold:
 \begin{itemize}
     \item  the automorphisms allow to define a differential $d:\mathcal{A}\rightarrow\Omega^{1}(\mathcal{A})$ under the left action $ad(x_{i})=d(x_{i})\nu_{x_{i}}(a)$, for $a\in\mathcal{A}$, and $1\leq i\leq n$;  
     \item for any pair of generators $x_{i}$ and $x_{j}$, $[\nu_{x_{i}},\nu_{x_{j}}]=0$.
 \end{itemize}
 
In a general way, if $\mathcal{A}$ is an algebra generated by a set of indeterminates $\{x_{1},\dotsc, x_{n}\}$ satisfying the relations  
 \begin{align}
     x_{i}x_{j}-a_{ij}x_{j}x_{i}=b_{ij}x_{i}+c_{ij}x_{j}+e_{ij}, \ \ {\rm for\ all}\ 1\leq i<j\leq n, \label{spag}
 \end{align}
 
where $a_{ij}, b_{ij}, c_{ij}, e_{ij}\in \mathbb{K},\ a_{ij}\not=0$, if we want to define $d$ we need that $\nu_{x_{i}}(x_{j})=a_{ij}^{-1}x_{j}-a_{ij}^{-1}b_{ij}$ and $\nu_{x_{j}}(x_{i})=a_{ij}x_{i}+c_{ij}$, for $i<j$. Thus, for any $x_{k}$, if $j>k$ then we have that $\nu_{x_{j}}(x_{k})=a_{kj}x_{k}+c_{kj}$, and if $j<k$ then $\nu_{x_{j}}(x_{k})=a_{jk}^{-1}x_{k}-a_{jk}^{-1}b_{jk}$. In order to establish if the automorphisms $\nu$'s are well defined, we proceed in the following way: 
 
\begin{enumerate}
    \item  If $j<k$, we have $\nu_{x_{k}}(x_{j})=a_{jk}x_{j}+c_{jk}$, and since $x_{j}x_{k}-a_{jk}x_{k}x_{j}=b_{jk}x_{j}+c_{jk}x_{k}+e_{jk}$, when we apply $\nu_{x_{k}}$ we obtain $ (a_{jk}x_{j}+c_{jk})\nu_{x_{k}}(x_{k})-a_{jk}\nu_{x_{k}}(x_{k})(a_{jk}x_{j}+c_{jk}) = b_{jk}(a_{jk}x_{j}+c_{jk})+c_{jk}\nu_{x_{k}}(x_{k})+e_{jk}$.
    
 If $\nu_{x_{k}}(x_{k})=a_{kk}x_{k}-b_{kk}$, we get the expression 
$0x_{k}+(a_{jk}b_{kk}(a_{jk}-1)+a_{jk}b_{jk}(a_{kk}-1))x_{j} + (a_{kk}a_{jk}-1)e_{jk}+(a_{jk}b_{kk}-b_{jk})c_{jk}=0$.
In this way,
\begin{align}
    b_{kk}(a_{jk}-1)+b_{jk}(a_{kk}-1)&=0,\label{eq1}\\ (a_{kk}a_{jk}-1)e_{jk}+(a_{jk}b_{kk}-b_{jk})c_{jk}&=0.\nonumber
\end{align}
 \item If $k<j$, we have $\nu_{x_{k}}(x_{j})=a_{kj}^{-1}x_{j}-a_{kj}^{-1}b_{kj}$, and since $x_{k}x_{j}-a_{kj}x_{j}x_{k}=b_{kj}x_{k}+c_{kj}x_{j}+e_{kj}$, when we apply $\nu_{x_{k}}$ we get $\nu_{x_{k}}(x_{k})(a_{kj}^{-1}x_{j}-a_{kj}^{-1}b_{kj})-a_{kj}(a_{kj}^{-1}x_{j}-a_{kj}^{-1}b_{kj})\nu_{x_{k}}(x_{k}) = b_{kj}\nu_{x_{k}}(x_{k})+c_{kj}(a_{kj}^{-1}x_{j}-a_{kj}^{-1}b_{kj})+e_{kj}$. 
 
 If $\nu_{x_{k}}(x_{k})=a_{kk}x_{k}-b_{kk}$, then 
\begin{align*}
     &(a_{kk}a_{kj}^{-1}c_{kj}-b_{kk}a_{kj}^{-1}+b_{kk}-c_{kj}a_{kj}^{-1})x_{j}\\&+a_{kk}a_{kj}^{-1}e_{kj}+b_{kk}a_{kj}^{-1}b_{kj}+c_{kj}a_{kj}^{-1}b_{kj}-e_{kj}=0,
\end{align*}
 which is equivalent to the equations
 \begin{align}
     (a_{kk}-1)c_{kj}+b_{kk}(a_{kj}-1)&=0,\label{eq2}\\e_{kj}(a_{kk}-a_{kj})+(b_{kk}+c_{kj})b_{kj}&=0.\nonumber
 \end{align}
 \end{enumerate}
Now, for a relation that do not involve $x_{k}$, we mean a relation between $x_{j}$ and $x_{t}$ (without lost of generality with $j<t$), i.e., $x_{j}x_{t}-a_{jt}x_{t}x_{j}=b_{jt}x_{j}+c_{jt}x_{t}+e_{jt}$, if we apply $\nu_{x_{k}}$ we get the following three cases:
\begin{enumerate}
    \item If $j<t<k$, we have that  $\nu_{x_{k}}(x_{j})=a_{jk}x_{j}-c_{jk}$ and $\nu_{x_{k}}(x_{t})=a_{tk}x_{t}-c_{tk}$, whence  
    \begin{align*}
        &\ \ \ \ (a_{jt}c_{tk}a_{jk}-a_{jk}c_{tk}-b_{jt}a_{jk}+a_{jk}a_{tk}b_{jt})x_{j}\\&+(a_{jt}a_{tk}c_{jk}-c_{jk}a_{tk}-c_{jt}a_{tk}+a_{jk}a_{tk}c_{jt})x_{t}\\&+a_{jk}a_{tk}e_{jt}+c_{jk}c_{tk}-a_{jt}c_{tk}c_{jk}+b_{jt}c_{jk}+c_{jt}c_{tk}-e_{jt}=0,
    \end{align*}
    which is equivalent to the following equalities:
    \begin{align}
        (a_{jt}-1)c_{tk}+(a_{tk}-1)b_{jt}&=0,\nonumber\\  (a_{jt}-1)c_{jk}+(a_{jk}-1)c_{jt}&=0,\label{eq3}\\(a_{jk}a_{tk}-1)e_{jt}+b_{jt}c_{jk}+c_{jt}c_{tk}+(1-a_{jt})c_{tk}c_{jk}&=0. \nonumber
    \end{align}
    
    \item If $j<k<t$, we have that  $\nu_{x_{k}}(x_{j})=a_{jk}x_{j}-c_{jk}$ and $\nu_{x_{k}}(x_{t})=a_{kt}^{-1}x_{t}-a_{kt}^{-1}b_{kt}$, so
    \begin{align*}
    &\ \ \ \ (a_{jt}a_{kt}^{-1}c_{jk}-c_{jk}a_{kt}^{-1}-c_{jt}a_{kt}^{-1}+a_{jk}a_{kt}^{-1}c_{jt})x_{t}\\&+(a_{jt}a_{kt}^{-1}b_{kt}a_{jk}-b_{jt}a_{jk}-a_{jk}a_{kt}^{-1}b_{kt}+a_{jk}a_{kt}^{-1}b_{jt})x_{j}\\&-e_{jt}+b_{jt}c_{jk}+c_{jt}a_{kt}^{-1}b_{kt}+c_{jk}a_{kt}^{-1}b_{kt}-a_{jt}a_{kt}^{-1}b_{kt}c_{jk}+a_{jk}a_{kt}^{-1}e_{jt}=0,
    \end{align*}
that is, 
    \begin{align}
    (a_{jt}-1)c_{jk}+(a_{jk}-1)c_{jt}&=0,\nonumber\\(a_{jt}-1)b_{kt}+b_{jt}(1-a_{kt})&=0,\label{eq4}\\(a_{jk}-a_{kt})e_{jt}+(c_{jt}+c_{jk})b_{kt}+(b_{jt}a_{kt}-a_{jt}b_{kt})c_{jk}&=0.\nonumber
    \end{align}
    \item If $k<j<t$, we have that $\nu_{x_{k}}(x_{j})=a_{kj}^{-1}x_{j}-a_{kj}^{-1}b_{kj}$ and $\nu_{x_{k}}(x_{t})=a_{kt}^{-1}x_{t}-a_{kt}^{-1}b_{kt}$, which implies that 
    {\small\begin{align*}
    &\ \ \ \ (b_{jt}-b_{kt}+a_{jt}b_{kt}-b_{jt}a_{kt})x_{j}\\&+(a_{jt}b_{kj}-b_{kj}-c_{jt}a_{kj}+c_{jt})x_{t}\\&+e_{jt}+b_{kj}b_{kt}-a_{jt}b_{kt}b_{kj}+a_{kt}b_{jt}b_{kj}+a_{kj}c_{jt}b_{kt}-a_{kt}a_{kj}e_{jt}=0,
    \end{align*}}
    or what is the same,
    \begin{align}
    (a_{jt}-1)b_{kt}+(1-a_{kt})b_{jt}&=0,\nonumber\\b_{kj}(a_{jt}-1)+(1-a_{kj})c_{jt}&=0,\label{eq5}\\(1-a_{kj}a_{kt})e_{jt}+b_{kt}(b_{kj}+a_{kj}c_{jt})+b_{kj}(a_{kt}b_{jt}-a_{jt}b_{kt})&=0.\nonumber
    \end{align}
\end{enumerate}
 
 Additionally, if we want that the automorphisms $\nu$'s commute with each other, since for any $x_{k}$, if $j>k$, we have that $\nu_{x_{j}}(x_{k})=a_{kj}x_{k}-c_{kj}$, and if $j<k$, $\nu_{x_{j}}(x_{k})=a_{jk}^{-1}x_{k}-a_{jk}^{-1}b_{jk}$, and $\nu_{x_{k}}(x_{k})=a_{kk}x_{k}-b_{kk}$, we need the \textit{commutativity equations} given by the following:
 \begin{align}\label{commutativeequations}
      &\text{{\rm 1.} If } j,t>k, \text{ then } c_{kj}(a_{kt}-1)=c_{kt}(a_{kj}-1).\nonumber\\  
      &\text{{\rm 2.} If } j>k>t, \text{ then }  c_{kj}(1-a_{tk})=b_{tk}(a_{kj}-1).\nonumber\\
      &\text{{\rm 3.} If } j,t<k, \text{ then }  b_{jk}(1-a_{tk})=b_{tk}(1-a_{jk}).\\
      &\text{{\rm 4.} If } k<j, \text{ then }  c_{kj}(a_{kk}-1)=b_{kk}(a_{kj}-1).\nonumber\\
      &\text{{\rm 5.} If } j<k, \text{ then }  b_{kk}(1-a_{jk})=b_{jk}(a_{kk}-1).\nonumber
 \end{align}

 \begin{remark}
 Now, we have to guarantee that the differential calculus $(\Omega(\mathcal{A}),d)$, where $\Omega^{1}(\mathcal{A})=\bigoplus_{j=1}^{n}d(x_{j})\mathcal{A}$
 and $\Omega^{i}(A)=\bigwedge_{j=1}^{i}\Omega^{1}(\mathcal{A})$, is a differential connected calculus. Since $\nu_{x_{k}}(x_{k})=a_{kk}x_{k}-b_{kk}$, the following equalities hold: 
  \begin{align*}
      d(x_{k}^{i})&=\sum_{j=1}^{i}x_{k}^{j-1}d(x_{k})x_{k}^{i-j}=\sum_{j=1}^{i}d(x_{k})\nu_{x_{k}}(x_{k}^{j-1})x_{k}^{i-j}\\
      &=d(x_{k})\biggl(\sum_{j=1}^{i}(a_{kk}x_{k}-b_{kk})^{j-1}x_{k}^{i-j}\biggr)\\
      &=d(x_{k})\biggl(\sum_{j=1}^{i}\sum_{t=0}^{j-1}{j-1\choose t}a_{kk}^{t}x_{k}^{i+t-j}(-b_{kk})^{j-1-t}\biggr).
  \end{align*}
  With these facts, we have that $d(a)=\sum_{i=1}^{n}d(x_{i})\overline{\partial}_{i}(a)$, where
  {\small\begin{align*}
   \overline{\partial}_{i}(x_{1}^{l_{1}}\cdots x_{n}^{l_{n}})=\biggl(\prod_{j=1}^{i-1} \nu_{x_{i}}(x_{j}^{l_{j}})\biggr)\biggl(\sum_{j=1}^{l_{i}}\sum_{t=0}^{j-1}{j-1\choose t}a_{kk}^{t}x_{k}^{l_{i}+t-j}(-b_{kk})^{j-1-t}\biggr)x_{i+1}^{l_{i+1}}\cdots x_{n}^{l_{n}}.   
  \end{align*}}
  
If $i=1$, then for $\sum_{r\in\Gamma_{r}} \alpha_{r} x_{1}^{l_{1r}}\cdots x_{n}^{l_{nr}}\in \mathcal{A}$,

  {\small\begin{align*}
      \overline{\partial}_{1}\biggl(\sum_r \alpha_{r} x_{1}^{l_{1r}}\cdots x_{n}^{l_{nr}}\biggr)=\sum_r \alpha_{r} \biggl(\sum_{j=1}^{l_{1r}}\sum_{t=0}^{j-1}{j-1\choose t}a_{11}^{t}x_{1}^{l_{1r}+t-j}(-b_{11})^{j-1-t}\biggr)x_{2}^{l_{2r}}\cdots x_{n}^{l_{nr}}=0,
  \end{align*}}
  
  which is equivalent to
  
  $$\sum_{r'\in\Gamma_{r}} \alpha_{r} \biggl(\sum_{j=1}^{l_{1r}}\sum_{t=0}^{j-1}{j-1\choose t}a_{11}^{t}x_{1}^{l_{1r}+t-j}(-b_{11})^{j-1-t}\biggr)=0,
 $$ 
  
  where $r'\in\Gamma_{r} $ if and only if $x_{2}^{l_{2r}}\cdots x_{n}^{l_{nr}}=x_{2}^{l_{2r'}}\cdots x_{n}^{l_{nr'}}$, i.e., we can rewrite this expression as $\overline{\partial}_{1}(\sum_r \alpha_{r} x_{1}^{l_{1r}}\cdots x_{n}^{l_{nr}})=0$, and hence $\overline{\partial}_{1}(p_{r}(x_{1})x_{2}^{l_{2r}}\cdots x_{n}^{l_{nr}})=0$, for all $r$. If $p_{r}(x_{1})=\sum_{m=0}^{l_{1r}}q_{m}x_{1}^{m}$, with $q_{m}\in\mathbb{K}$, then
  \begin{align*}
      0&=\overline{\partial}_{1}(p_{r}(x)x_{2}^{l_{2r}}\cdots x_{n}^{l_{nr}})\\
      &=\sum_{m=0}^{l_{1r}}q_{m}\biggl(\sum_{j=1}^{m}(a_{11}x_{1}-b_{11})^{j-1}x_{1}^{m-j}\biggr)x_{2}^{l_{2r}}\cdots x_{n}^{l_{nr}}\\
      &=\sum_{m=0}^{l_{1r}}q_{m}\biggl(\sum_{j=1}^{m}\sum_{t=0}^{j-1}{j-1\choose t}a_{11}^{t}(-b_{11})^{j-1-t}x_{1}^{m+t-j}\biggr)x_{2}^{l_{2r}}\cdots x_{n}^{l_{nr}}
  \end{align*}
  Since in the last expression the unique way to obtain the highest exponent of $x_{1}$ is considering $m=l_{1r}, j=1$ and $t=0$, we have that $q_{l_{1r}}{1-1\choose 0}a_{11}^{0}b_{11}^{1-1-0}=q_{l_{1r}}=0$, and so $p_{r}(x_{1})\in\mathbb{K}$, for all $r$. In other words, we have that $a\in{\rm Ker}(\overline{\partial}_{1})={\rm gen}_{\mathbb{K}}\{x_{2},\dotsc,x_{n}\}$. Following this reasoning, we can check that ${\rm Ker}(\overline{\partial}_{1})\cap\cdots\cap {\rm Ker}(\overline{\partial}_{k})={\rm gen}_{\mathbb{K}}\{x_{k+1},\dotsc, x_{n}\}$, for each $1\leq k \leq n$, and thus ${\rm Ker}(d)={\rm Ker}(\overline{\partial}_{1})\cap\cdots \cap {\rm Ker}(\overline{\partial}_{n})=\mathbb{K}$.   
 \end{remark}
 
\begin{remark}

In order to apply Lemma \ref{chayanne} with the aim of obtaining an integrable differential calculus with volume form $\omega=d(x_{1})\wedge \cdots\wedge d(x_{n})$ and automorphism $\nu_{\omega}=\nu_{x_{1}}\circ\cdots\circ \nu_{x_{n}}$, for each injective increasing maps $\varphi_{i}:\{1,\dotsc ,k\}\rightarrow \{1,\dotsc ,n\}$ and $\overline{\varphi_{i}}:\{1,\dotsc ,(n-k)\}\rightarrow \{1,\dotsc ,n\}\setminus\text{Im}(\varphi_{i})$, that determine an $n$-permutations $\varphi_{i}\_\overline{\varphi}_{i}$ such that $\varphi_{i}\_\overline{\varphi}_{i}(p)=\varphi_{i}(p)$ if $p\leq k$, and $\varphi_{i}\_\overline{\varphi}_{i}(p)=\overline{\varphi}_{i}(p-k)$ if $k<p\leq n$, and analogously $\overline{\varphi}_{i}\_\varphi_{i}$, we define the elements $\omega_{i}^{k},\overline{\omega}_{i}^{k}\in\Omega^{k}(\mathcal{A})$ as,
\begin{align*}
    \omega^{k}_{i}&=A_{ik}d(x_{\varphi_{i}(1)})\wedge \cdots \wedge d(x_{\varphi_{i}(k)}),\\ \overline{\omega}^{n-k}_{i}&=\overline{A}_{i(n-k)}d(x_{\overline{\varphi}_{i}(1)})\wedge\cdots \wedge d(x_{\overline{\varphi}_{i}(n-k)}),
\end{align*}
where $A_{ik},\overline{A}_{ik}\in\mathbb{K}$ are defined (to obtain $\overline{\omega}_{i}^{n-k}\wedge\omega_{i}^{k}=\omega$) as follows: if $i<j$, then $x_{j}d(x_{i})=d(x_{i})\nu_{x_{i}}(x_{j})=d(x_{i})(a_{ij}^{-1}x_{j}-a_{ij}^{-1}b_{ij})$, and therefore applying $d$, $d(x_{i})\wedge d(x_{j})=-a_{ij}d(x_{j})\wedge d(x_{i})$. In this way,

\begin{itemize}
    \item if $\varphi_{i}(1)=1$, then
    \begin{align*}
        A_{ik}=\prod_{s=1}^{n-k}\prod_{t=1}^{\overline{\varphi}_{i}(s)-1}(-1)a_{t\overline{\varphi}_{i}(s)},\ \ \ \
        \overline{A}_{i(n-k)}=\prod_{s=1}^{n-k-1}\prod_{t=s+1}^{n-k}(-1)a^{-1}_{\overline{\varphi}_{i}(s)\overline{\varphi}_{i}(t)}.
    \end{align*}
    \item If $\overline{\varphi}_{i}(1)=1$, then
    \begin{align*}
        A_{ik}=\prod_{s=1}^{k-1}\prod_{t=s+1}^{k}(-1)a^{-1}_{\varphi_{i}(s)\varphi_{i}(t)},\ \ \ \
        \overline{A}_{i(n-k)}=\prod_{s=1}^{k}\prod_{t=\varphi_{i}(s)+1}^{\overline{\varphi}_{i}(n-k)}(-1)a_{\varphi_{i}(s)t}.
    \end{align*}
\end{itemize}

Since $\varphi_{i}(p):=i_{p}$, for $1\leq p\leq k$, and the set $\{d(x_{i_{1}})\wedge \cdots \wedge d(x_{i_{k}})\mid \ 1\leq i_{1}<\cdots<i_{k}\leq n \}$ forms a right (and left) base of $\Omega^{k}(\mathcal{A})$, for an arbitrary $\omega'=\sum_{1\leq i_{1}<\dotsb <i_{k}\leq n}d(x_{i_{1}})\wedge \dotsb \wedge d(x_{i_{k}}) a_{i_{1}\dotsc  i_{k}}\in\Omega^{k}(\mathcal{A})$, where $a_{i_{1}\dotsc i_{k}}\in\mathcal{A}$, we have the equalities 
\begin{align*}
    \omega^{k}_{i}\pi_{\omega}(\overline{\omega}_{i}^{n-k}\wedge \omega')&=\omega^{k}_{i}\pi_{\omega}(\overline{\omega}_{i}^{n-k}\wedge [d(x_{i_{1}})\wedge \cdots \wedge d(x_{i_{k}}) a_{i_{1}\cdots i_{k}}])\\
    &=d(x_{i_{1}})\wedge \cdots \wedge d(x_{i_{k}})\pi_{\omega}(\overline{\omega}_{i}^{n-k}\wedge \omega^{k}_{i} a_{i_{1}\cdots i_{k}})\\
    &=d(x_{i_{1}})\wedge \cdots \wedge d(x_{i_{k}}) a_{i_{1}\cdots i_{k}},
\end{align*}

\begin{align*}
    \nu_{\omega}^{-1}( \pi_{\omega}(\omega'\wedge \omega_{i}^{n-k}))\overline{\omega}_{i}^{k}&=\nu_{\omega}^{-1}( \pi_{\omega}(d(x_{i_{1}})\wedge \cdots \wedge d(x_{i_{k}}) a_{i_{1}\cdots i_{k}}\wedge \omega_{i}^{n-k}))\overline{\omega}_{i}^{k}\\
    &=\nu_{\omega}^{-1}( \pi_{\omega}(\overline{\omega}_{i}^{k} \wedge \omega_{i}^{n-k} b_{i_{1}\cdots i_{k}} ))d(x_{i_{1}})\wedge \cdots \wedge d(x_{i_{k}})\\
    &=\nu_{x_{\varphi_{i}(1)}}^{-1}\circ \cdots \circ \nu_{x_{\varphi_{i}(k)}}^{-1}(a_{i_{1}\cdots i_{k}})d(x_{i_{1}})\wedge \cdots \wedge d(x_{i_{k}}),
\end{align*}
where $b_{i_{1}\cdots i_{k}}:=\nu_{x_{\overline{\varphi}_{i}(1)}}\circ \cdots \circ \nu_{x_{\overline{\varphi}_{i}(n-k)}}(a_{i_{1}\cdots i_{k}})$. With this, we get that equations (\ref{integrablelem}) hold, and by Lemma \ref{chayanne}, $\Omega(\mathcal{A})$ is an integrable differential calculus.
\end{remark}

From \cite{LFGRSV}, Theorem 7.4.1, the algebra $\mathcal{A}$ defined by relations (\ref{spag}) has Gelfand-Kirillov dimension $n$, that is, ${\rm GKdim}(\mathcal{A}) = n$, so the following result follows.\\

\begin{theorem}\label{mostimportant}
Let $\mathcal{A}$ be the $\mathbb{K}$-algebra generated by $x_{1},\dotsc ,x_{n}$ subject to the relations given by 
 \begin{align*}
     x_{i}x_{j}-a_{ij}x_{j}x_{i}=b_{ij}x_{i}+c_{ij}x_{j}+e_{ij}, \ \ \ \text{where }a_{ij},b_{ij},c_{ij},e_{ij}\in \mathbb{K},\ a_{ij}\not=0, 
 \end{align*}    
 for all $1\leq i<j\leq n$. If relations given in  {\rm(\ref{eq1}), (\ref{eq2}), (\ref{eq3}), (\ref{eq4}), (\ref{eq5})} and commutativity equations (\ref{commutativeequations}) hold,  then $\mathcal{A}$ is differentially smooth.
\end{theorem}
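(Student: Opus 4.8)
The plan is to follow the blueprint of the proof of Theorem \ref{Theorem3d}, now assembling the general machinery developed throughout Section \ref{sec:4}. First I would define, for each generator $x_k$, the $\mathbb{K}$-linear endomorphism $\nu_{x_k}$ through its prescribed affine action on the generators given in Section \ref{sec:4} (namely $\nu_{x_k}(x_k)=a_{kk}x_k-b_{kk}$, together with the shifted scalings of each $x_j$, $j\neq k$, determined by the coefficients $a_{ij},b_{ij},c_{ij}$). Since each such assignment is affine with invertible linear part (all $a_{ij}$ are nonzero), every $\nu_{x_k}$ restricts to a bijection of the generating set, so it extends to an algebra automorphism of $\mathcal{A}$ as soon as it respects the defining relations.

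Next I would verify precisely this compatibility. For a relation involving $x_k$ this is the content of the computations leading to equations (\ref{eq1}) and (\ref{eq2}); for a relation $x_jx_t-a_{jt}x_tx_j=b_{jt}x_j+c_{jt}x_t+e_{jt}$ not involving $x_k$, one splits into the three positional cases $j<t<k$, $j<k<t$, and $k<j<t$, which produce exactly equations (\ref{eq3}), (\ref{eq4}), and (\ref{eq5}). Under the hypotheses these vanish identically, so each $\nu_{x_k}$ descends to a well-defined automorphism of $\mathcal{A}$, and the commutativity equations (\ref{commutativeequations}) guarantee $[\nu_{x_i},\nu_{x_j}]=0$ for all pairs. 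With the commuting family $\{\nu_{x_k}\}$ in hand, I would equip the right free module $\Omega^{1}(\mathcal{A})=\bigoplus_{j=1}^{n}d(x_j)\mathcal{A}$ with the left action $a\,d(x_i)=d(x_i)\nu_{x_i}(a)$; the well-definedness of the $\nu$'s is exactly what makes the derivation $d$ well defined. Forming $\Omega^{i}(\mathcal{A})=\bigwedge^{i}\Omega^{1}(\mathcal{A})$ and applying $d$ to the relations yields the anticommutation rules $d(x_i)\wedge d(x_j)=-a_{ij}\,d(x_j)\wedge d(x_i)$, and because the $\nu$'s commute no further relations are imposed; hence $\Omega^{n}(\mathcal{A})=\omega\mathcal{A}\cong\mathcal{A}$ as a left and right module, with volume form $\omega=d(x_1)\wedge\cdots\wedge d(x_n)$ and $\nu_{\omega}=\nu_{x_1}\circ\cdots\circ\nu_{x_n}$.

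It then remains to record the three structural properties of Definition \ref{deffsmooth}. Connectedness, $\mathrm{Ker}(d|_{\Omega^{0}})=\mathbb{K}$, follows from the inductive kernel computation giving $\mathrm{Ker}(\overline{\partial}_1)\cap\cdots\cap\mathrm{Ker}(\overline{\partial}_k)=\mathrm{gen}_{\mathbb{K}}\{x_{k+1},\dots,x_n\}$. Integrability follows by invoking Lemma \ref{chayanne} with the explicit forms $\omega_i^{k}$ and $\overline{\omega}_i^{k}$ built from the increasing injections $\varphi_i,\overline{\varphi}_i$ and the scalars $A_{ik},\overline{A}_{ik}$, for which the two displayed computations verify the identities (\ref{integrablelem}). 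Finally, \cite{LFGRSV}, Theorem 7.4.1 gives $\mathrm{GKdim}(\mathcal{A})=n$, so the calculus is connected, $n$-dimensional, and integrable, and $\mathcal{A}$ is differentially smooth.

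The main obstacle is the well-definedness step: one must confirm that each $\nu_{x_k}$ preserves every defining relation, a genuinely case-heavy verification (the families (\ref{eq1})--(\ref{eq5}) together with the three-way position analysis for relations not containing $x_k$), and then that these automorphisms mutually commute. Conceptually the delicate point is that commutativity of the $\nu$'s is exactly what prevents the wedge products from collapsing, ensuring $\Omega^{n}(\mathcal{A})\neq 0$; without it the top form could degenerate and the obstruction of Remark \ref{noDS} would apply. The integrability identities, although routine once the coefficients $A_{ik},\overline{A}_{ik}$ are chosen correctly, also demand care in tracking signs through the reordering of the $d(x_i)$.
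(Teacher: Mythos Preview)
Your proposal is correct and follows essentially the same approach as the paper: the paper does not give a separate proof of Theorem~\ref{mostimportant}, but rather assembles it from the computations in Section~\ref{sec:4} (equations (\ref{eq1})--(\ref{eq5}) for well-definedness of the $\nu_{x_k}$, the commutativity equations (\ref{commutativeequations}), the two Remarks establishing connectedness and integrability via Lemma~\ref{chayanne}, and \cite{LFGRSV}, Theorem~7.4.1 for the Gelfand--Kirillov dimension), exactly as you outline.
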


From Theorem \ref{mostimportant} we get the differential smoothness of algebras such as the algebra of linear partial differential operators, the algebra of linear partial $q$-dilation operators, the additive analogue of Weyl algebra and the multiplicative analogue of the Weyl algebra (see \cite{LFGRSV}, Chapter 2, for a detailed definition of every algebra). 

From the above results, we obtain the following corollary.
 
\begin{corollary}\label{general}
Consider the Ore extension $\mathcal{A}=\mathbb{K}[x_{1},\dotsc ,x_{n}][y;\sigma, \delta]$ with $\sigma(x_{i})=b_{i}x_{i}+a_{i}$ and $\delta(x_{i})=c_{i}x_{i}$, satisfying one of the following conditions:
\begin{itemize}
    \item $a_{i}\not=0$ and $c_{i}=0$, for all $i=1,\dotsc ,n$; or 
    \item  $a_{i}=0$, for all $i=1,\dotsc ,n$, and $c_{i}(b_{k}-1)+c_{k}(b_{i}-1)=0$, for all pair of elements $i\not=k$. 
\end{itemize}

\noindent If ${\rm GKdim}(\mathcal{A})=n$, then $\mathcal{A}$ is a differentially smooth algebra.
\end{corollary}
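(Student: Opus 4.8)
The plan is to exhibit $\mathcal{A}$ as an algebra of the type (\ref{spag}) on the $n+1$ generators $x_1,\dots,x_n,x_{n+1}:=y$ and then to apply Theorem \ref{mostimportant}. The defining relations of the Ore extension are the commuting relations $x_ix_j=x_jx_i$ for $1\le i<j\le n$, together with $yx_i=\sigma(x_i)y+\delta(x_i)=(b_ix_i+a_i)y+c_ix_i$ for $1\le i\le n$. Since $\sigma$ is an automorphism we have $b_i\neq 0$, so the latter can be solved for $x_iy$ and rewritten as
\begin{align*}
x_iy-b_i^{-1}yx_i=-b_i^{-1}c_i\,x_i-b_i^{-1}a_i\,y.
\end{align*}
Reading off the structure constants of (\ref{spag}) gives $a_{ij}=1$ and $b_{ij}=c_{ij}=e_{ij}=0$ for $i<j\le n$, together with $a_{i,n+1}=b_i^{-1}$, $b_{i,n+1}=-b_i^{-1}c_i$, $c_{i,n+1}=-b_i^{-1}a_i$ and $e_{i,n+1}=0$. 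As explained in the paragraph preceding Theorem \ref{mostimportant}, the well-definedness of $d$ forces the off-diagonal automorphism actions $\nu_{x_i}(y)=b_iy+c_i$ and $\nu_y(x_i)=b_i^{-1}x_i-b_i^{-1}a_i$; for the diagonal freedom I would take $\nu_w(w)=w$ on every generator, i.e.\ $a_{kk}=1$, $b_{kk}=0$.

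The key simplification is that $n(n-1)/2$ of the defining relations are the commuting relations among the $x_i$, for which all structure constants except $a_{ij}=1$ vanish. Hence every instance of (\ref{eq1})--(\ref{eq5}) and of the commutativity equations (\ref{commutativeequations}) whose indices all lie in $\{1,\dots,n\}$ holds trivially. The verification therefore reduces to those instances in which the index $n+1$ (that is, $y$) occurs: the pairs $\{i,n+1\}$ for (\ref{eq1})--(\ref{eq2}) together with the induced self-parameter constraints, and the triples $\{i,j,n+1\}$ for (\ref{eq3})--(\ref{eq5}) and (\ref{commutativeequations}).

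I would then treat the two cases separately. In the first case ($c_i=0$, $a_i\neq 0$) every $b$-type constant vanishes, since $b_{i,n+1}=-b_i^{-1}c_i=0$; consequently almost all the equations collapse to identities, the surviving first lines being immediate from $c_i=0$, while the second line of (\ref{eq1}) together with $a_i\neq 0$ fixes the last self-parameter $b_{n+1,n+1}=0$, after which consistency is clear. In the second case ($a_i=0$) one has $c_{i,n+1}=0$ but $b_{i,n+1}=-b_i^{-1}c_i$, which may be nonzero; now the $b$-type constants are active and the decisive conditions are the commutativity equations (\ref{commutativeequations})(3) for the triples $\{i,j,n+1\}$. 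After clearing the factors $b_i^{-1},b_j^{-1}$, these reduce precisely to the compatibility relation between the $b_i$ and the $c_i$ imposed in the hypothesis, and one must check in parallel that the accompanying scalar lines of (\ref{eq1})--(\ref{eq5}) remain consistent. Carrying out this reduction carefully, keeping track of every sign and inverse, is the one genuinely computational point and the main obstacle.

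Once all hypotheses of Theorem \ref{mostimportant} are confirmed (with its $n$ replaced by $n+1$), that theorem yields a connected, integrable differential calculus on $\mathcal{A}$ of dimension equal to the number $n+1$ of generators, with volume form $\omega=d(x_1)\wedge\cdots\wedge d(x_n)\wedge d(y)$ and automorphism $\nu_\omega=\nu_{x_1}\circ\cdots\circ\nu_{x_n}\circ\nu_y$. The hypothesis on ${\rm GKdim}(\mathcal{A})$ is then used exactly to guarantee that the Gelfand--Kirillov dimension coincides with the dimension of the calculus just constructed, which is the matching condition required by Definition \ref{deffsmooth}; hence $\mathcal{A}$ is differentially smooth.
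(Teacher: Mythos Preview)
Your approach is exactly what the paper intends: the corollary is stated without proof as a direct consequence of Theorem \ref{mostimportant}, and your reduction via the $(n+1)$-generator presentation $x_1,\dots,x_n,x_{n+1}:=y$, with the structure constants you list, is the correct way to invoke it.

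The one genuine slip is in your final paragraph. The calculus you construct has dimension $n+1$, yet the stated hypothesis is ${\rm GKdim}(\mathcal{A})=n$; these do \emph{not} coincide, so your closing sentence is false as written. Since $\mathcal{A}$ has $n+1$ generators and, by the paragraph preceding Theorem \ref{mostimportant}, GK-dimension equal to that number, the hypothesis ${\rm GKdim}(\mathcal{A})=n$ is almost certainly a misprint for $n+1$. You should flag this discrepancy explicitly rather than silently reinterpret the hypothesis to make the dimensions match. A similar caution applies in your second case: when you clear denominators in (\ref{commutativeequations})(3) with $k=n+1$, you obtain $c_j(b_t-1)=c_t(b_j-1)$, which differs in sign from the stated hypothesis $c_i(b_k-1)+c_k(b_i-1)=0$; again this looks like a slip in the statement rather than in your method, but ``reduce precisely'' overstates what the computation actually gives.
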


\begin{remark}
  There is a natural question: Why do we choose for automorphism $\nu_{x_{k}}$ the assignment $\nu_{x_{k}}(x_{k})\in \mathbb{K}x_{k}+\mathbb{K}$? We make this choice because we want that the automorphisms $\nu$'s commute between them; of course, there are other possibilities. For instance, in \cite{MS88} we find that the automorphisms of $\mathbb{K}[x,y]$ are defined considering the assignments $x\mapsto x$, $y\mapsto y+h(x)$, or $x\mapsto a_{11}x + a_{12}y+a_{13}$, $y\mapsto a_{21}x+a_{22}y+a_{23}$, with $a_{11}a_{22}-a_{12}a_{21}\not=0$. If $\nu_{y}(x)=a_{11}x+a_{13}$, i.e., $a_{12}=0$, and $\nu_{y}(y)\not\in \mathbb{K}y+\mathbb{K}$, we have the following possibilities:
  
  \begin{enumerate}
      \item $\nu_{y}(x)=x$ and $\nu_{y}(y)=y+h(x)$ with $0\not=h(x)\in\mathbb{K}[x]$. In this case, $\nu_{x}(x)=a_{11}'x+a_{12}'y+a_{13}'$ and $\nu_{x}(y)=a_{21}'y+a_{23}'$. If $[\nu_{x},\nu_{y}]=0$, then $a_{21}'(y+h(x))+a_{23}'=a_{21}'y+a_{23}'+h(a_{11}'x+a_{12}'y+a_{13}')$, which implies that $$a_{21}'h(x)=h(a_{11}'x+a_{12}'y+a_{13}'),$$ whence $a_{21}'=1$, and $h(x)\in\mathbb{}$ or $\nu_{x}(x)=x$. 
      \item $\nu_{y}(x)=a_{11}x+a_{13}$ and $\nu_{y}(y)=a_{21}x+a_{22}y+a_{23}$ with $a_{21}\not=0$. In this way, for $\nu_{x}(x)=a_{11}'x+a_{12}'y+a_{13}'$ and $\nu_{x}(y)=a_{21}'y+a_{23}'$, if $[\nu_{x},\nu_{y}]=0$, we get that 
      \[
      a_{12}'a_{21}x+a_{12}'a_{22}y+a_{12}'a_{23}+a_{11}'a_{13}+a_{13}' = a_{11}a_{12}'y+a_{11}a_{13}'+a_{13}.
      \]
      
      This equality implies that $a_{12}'a_{21}=0$, $a_{12}'a_{22}=a_{11}a_{12}'$ and $a_{12}'a_{23}+a_{11}'a_{13}+a_{13}'=a_{11}a_{13}'+a_{13}$. Since $a_{21}\not=0$, we obtain $a_{12}'=0$ and $(a_{11}'-1)a_{13}=(a_{11}-1)a_{13}'$. With respect to $y$,
      \begin{align*}
          a_{21}(a_{11}'x+a_{13}')+a_{22}(a_{21}'y+a_{23}')+a_{23} = &\ a_{21}'(a_{21}x+a_{22}y+a_{23})+a_{23}'\\ 
         a_{21}a_{11}'x+a_{21}a_{13}'+a_{22}a_{23}'+a_{23} = &\ a_{21}'a_{21}x+a_{21}'a_{23}+a_{23}',
      \end{align*}
    whence $a_{21}a_{11}'=a_{21}'a_{21}$ and $a_{21}a_{13}'+(a_{22}-1)a_{23}'=(a_{21}'-1)a_{23}$. Since $a_{21}\not=0$, we obtain $a_{11}'=a_{21}'$.
  \end{enumerate}

In the case of automorphisms that commute with each other, it may be that $\nu_{y}(y)\not\in \mathbb{K}y+\mathbb{K}$. However, if we use polynomial expressions of degree greater than one, such as in Case 1, then the commutativity between the automorphisms may fail.
 \end{remark}

\section{Differential smoothness of diffusion algebras}\label{sec:5}

Diffusion algebras were introduced formally by Isaev et al., \cite{IPR01} as quadratic algebras that appear as algebras of operators that model the stochastic flow of motion of particles in a one dimensional discrete lattice. However, its origin can be found in Krebs and Sandow \cite{KS97}. 

Let us start by recalling its definition.

\begin{definition}{\rm (\cite{IPR01}, p. 5817)}\label{difftipo1}
The \textit{diffusion algebras type 1} are affine algebras $\mathcal{D}$ that are generated by $n$ intedeterminates $D_1,\dotsc, D_n$ over a field $\mathbb{K}$ that admit a linear PBW basis of ordered monomials of the form $D_{\alpha_{1}}^{k_{1}}D_{\alpha_{2}}^{k_{2}}\cdots D_{\alpha_{n}}^{k_{n}}$ with $k_{j}\in\mathbb{N}$ and $\alpha_1>\alpha_2>\cdots > \alpha_n$, and there exist elements $x_1\dotsc ,x_n\in \mathbb{K}$ such that for all $1\leq i<j\leq n$, there exist $\lambda_{ij}\in\mathbb{K}^*$ such that
\begin{equation}\label{equationsrule}
    \lambda_{ij}D_iD_j-\lambda_{ji}D_jD_i=x_jD_i-x_iD_j.
\end{equation}
\end{definition}

Fajardo et al., \cite{LFGRSV} studied ring-theoretical properties of a graded version of these algebras; its definition is the following: 

\begin{definition}{\rm (\cite{LFGRSV}, Section 2.4)}\label{difftipo2}
The \textit{diffusion algebras type 2} are affine algebras $\mathcal{D}$ generated by $2n$ variables $\{D_1,\dotsc ,D_n,x_1,\dotsc ,x_n\}$ over a field $\mathbb{K}$ that admit a linear PBW basis of ordered monomials of the form $B_{\alpha_{1}}^{k_{1}}B_{\alpha_{2}}^{k_{2}}\cdots B_{\alpha_{n}}^{k_{n}}$ with $B_{\alpha_{i}}\in \{D_1,\dotsc ,D_n,x_1,\dotsc ,x_n\}$, for all $i\leq 2n$, $k_{j}\in\mathbb{N}$ and $\alpha_1>\alpha_2>\cdots > \alpha_n$, such that, for all $1\leq i<j\leq n$, there exist elements $\lambda_{ij}\in \mathbb{K}^*$ such that
\begin{equation}\label{relations}
    \lambda_{ij}D_iD_j-\lambda_{ji}D_jD_i=x_jD_i-x_iD_j.
\end{equation}
\end{definition}

Different physical applications of algebras type 1 and 2 have been studied in the literature. From the point of view of ring-theoretical, homological and computational properties, several papers have been published (see \cite{LFGRSV}, \cite{Hamidizadehetal2020}, \cite{Hinchcliffe2005}, \cite{Levandovskyy2005}, \cite{LezamaVenegas2020}, \cite{NinoReyes2020}, \cite{ReyesRodriguez2021}, \cite{ReyesSuarez2019}, \cite{ReyesSuarez2021}, and \cite{Twarok2003}).

\begin{remark}
About the above definitions of diffusion algebras, we make the following comments:
\begin{itemize}
\item Isaev et al., \cite{IPR01} and Pyatov and Twarok \cite{PT02} defined diffusion algebras type 1 by taking $\mathbb{K} = \mathbb{C}$. Nevertheless, for the results obtained in this paper we can take any field not necessarily $\mathbb{C}$.
\item Hinchcliffe \cite{Hinchcliffe2005}, Definition 2.1.1, considered the following definition of diffusion algebras. Let $R$ be the algebra generated by $n$ indeterminates $x_1, x_2$, $\dotsc$, $x_n$ over $\mathbb{C}$ subject to relations $a_{ij}x_ix_j - b_{ij}x_jx_i = r_jx_i - r_ix_j$, whenever $i < j$, for some parameters $a_{ij}\in \mathbb{C}\ \backslash\  \{0\}$, for all $i < j$ and $b_{ij}, r_i \in \mathbb{C}$, for all $i < j$. He defined the {\em standard monomials} to be those of the form $x_n^{i_n}x_{n-1}^{i_{n-1}}\dotsb x_2^{i_2}x_1^{i_1}$. $R$ is called a {\em diffusion algebra} if it admits a {\em PBW} {\em basis of these standard monomials}. In other words, $R$ is a diffusion algebra if these standard monomials are a $\mathbb{C}$-vector space basis for $R$. In what follows, we will consider the notation presented in Definitions \ref{difftipo1} and \ref{difftipo2}, i.e., over any field $\mathbb{K}$. 
\item Following Krebs and Sandow \cite{KS97}, the relations (\ref{equationsrule}) are consequence of subtracting (quadratic) operator relations of the type
  \begin{equation*}\label{Prediff}
    \Gamma_{\gamma\delta}^{\alpha\beta}D_{\alpha}D_{\beta}=D_{\gamma}X_{\delta}-X_{\gamma}D_{\delta},\ \ \ \ \ \ \text{for all}\ \gamma,\delta=0,1,\dotsc ,n-1,
\end{equation*}
 where $\Gamma_{\gamma\delta}^{\alpha\beta}\in \mathbb{K}$, and $D_{i}$'s and $X_{j}$'s are operators of a particular vector space, such that not necessarily $[D_{i},X_{j}]=0$ holds (see \cite{KS97}, p. 3168, for more details). 
 \end{itemize}
\end{remark}

Next, we present some combinatorial properties appearing in diffusion algebras (Lemma \ref{combin1} and Proposition \ref{conmutaciondiffder}). It is very possible that these are found in the literature; however, we could not find them explicitly somewhere. Before, we consider the following definition.

\begin{definition}\label{pyq}
If $D_{i}$ and $D_{j}$ are generators of a diffusion algebra $\mathcal{D}$ (as in Definitions \ref{difftipo1} or \ref{difftipo2}), for all pair of elements $k,n\in \mathbb{N}$ such that $k\le n$, the numbers $P_{k}^{n}, Q_{k}^{n}\in \mathbb{K}$ are defined as
\begin{align*}
    P_{k}^{n}&=\sum_{t=1}^{k} {n-k+t-1\choose n-k} \lambda_{ji}^{t-1}\lambda_{ij}^{k-t},\ \ \ \ \ 
    Q_{k}^{n}={n\choose k-1} \lambda_{ji}^{k-1}, 
\end{align*}
where ${{n-k+t-1\choose n-k}}\in \mathbb{N}$ are the entries of the $k$-diagonal of the triangle formed with the first $n+1$ levels of the Pascal's triangle.
\end{definition}

\begin{lemma}\label{combin1}
If we consider elements $k, n\in \mathbb{N}$ such that $k \leq n$, then:
\begin{enumerate}
    \item[\rm 1.] \label{lemaa} $P_{k}^{n+1}=P_{k-1}^{n}\lambda_{ij}+Q_{k}^{n}$ and  $P_{n+1}^{n+1}=P_{n}^{n}\lambda_{ij}+\lambda_{ji}^{n}$.
    \item[\rm 2.] \label{lemab} $Q_{k}^{n+1}=Q_{k-1}^{n}\lambda_{ji}+Q_{k}^{n}$ and $Q_{n+1}^{n+1}=Q_{n}^{n}\lambda_{ji}+\lambda_{ji}^{n}$.
    \item[\rm 3.] \label{lemac} $P_{1}^{n}=Q_{1}^{n}=1$. 
\end{enumerate}
\end{lemma}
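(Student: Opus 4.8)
The plan is to prove all three parts by direct manipulation of the defining sums in Definition \ref{pyq}, treating part 3 as a base case and parts 1 and 2 as recurrences that one verifies by reindexing. These are purely combinatorial identities about the quantities
\[
P_{k}^{n}=\sum_{t=1}^{k} \binom{n-k+t-1}{n-k}\lambda_{ji}^{t-1}\lambda_{ij}^{k-t},\qquad Q_{k}^{n}=\binom{n}{k-1}\lambda_{ji}^{k-1},
\]
so no structural facts about the diffusion algebra are needed; everything reduces to identities in $\mathbb{K}[\lambda_{ij},\lambda_{ji}]$ governed by Pascal's rule $\binom{m}{r}=\binom{m-1}{r}+\binom{m-1}{r-1}$.

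First I would dispatch part 3, which is immediate: setting $k=1$ in the formula for $P_{k}^{n}$ leaves a single term $t=1$ equal to $\binom{n-1}{n-1}\lambda_{ji}^{0}\lambda_{ij}^{0}=1$, and setting $k=1$ in $Q_{k}^{n}$ gives $\binom{n}{0}\lambda_{ji}^{0}=1$. The $Q$-recurrences in part 2 are the next easiest, since $Q$ is a single binomial coefficient: the identity $Q_{k}^{n+1}=Q_{k-1}^{n}\lambda_{ji}+Q_{k}^{n}$ unwinds to $\binom{n+1}{k-1}=\binom{n}{k-2}+\binom{n}{k-1}$ after the $\lambda_{ji}$ powers are matched (note $Q_{k-1}^{n}\lambda_{ji}=\binom{n}{k-2}\lambda_{ji}^{k-2}\lambda_{ji}=\binom{n}{k-2}\lambda_{ji}^{k-1}$), which is exactly Pascal's rule; the boundary case $Q_{n+1}^{n+1}=Q_{n}^{n}\lambda_{ji}+\lambda_{ji}^{n}$ is checked by evaluating both sides to $\lambda_{ji}^{n}$.

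The main work, and the step I expect to be the principal obstacle, is the first $P$-recurrence $P_{k}^{n+1}=P_{k-1}^{n}\lambda_{ij}+Q_{k}^{n}$, because here one must match a whole sum term-by-term rather than a single coefficient. The approach is to expand the right-hand side: $P_{k-1}^{n}\lambda_{ij}=\sum_{t=1}^{k-1}\binom{n-k+t}{n-k+1}\lambda_{ji}^{t-1}\lambda_{ij}^{k-t}$ (shifting the exponent of $\lambda_{ij}$ up by one), and then compare against $P_{k}^{n+1}=\sum_{t=1}^{k}\binom{n-k+t+1}{n-k+1}\lambda_{ji}^{t-1}\lambda_{ij}^{k-t}$. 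For each fixed $t$ with $1\le t\le k-1$ the claim becomes the binomial identity $\binom{n-k+t+1}{n-k+1}=\binom{n-k+t}{n-k+1}+\binom{n-k+t}{n-k}$, again just Pascal's rule; the leftover $t=k$ term of $P_{k}^{n+1}$, namely $\binom{n+1}{n-k+1}\lambda_{ji}^{k-1}=\binom{n+1}{k}\lambda_{ji}^{k-1}$, must be absorbed together with the contribution of $Q_{k}^{n}=\binom{n}{k-1}\lambda_{ji}^{k-1}$, and the bookkeeping of which binomial coefficient goes with which power of $\lambda_{ji}$ is where care is required. The boundary identity $P_{n+1}^{n+1}=P_{n}^{n}\lambda_{ij}+\lambda_{ji}^{n}$ is then a special case handled by the same reindexing, with the top term supplying the isolated $\lambda_{ji}^{n}$. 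Throughout, the only genuine ingredient is Pascal's rule applied to the explicit entries of the Pascal triangle named in Definition \ref{pyq}, so once the index shifts are set up correctly the verification is routine; the subtlety lies entirely in aligning the summation ranges and the exponents so that the term-by-term comparison is legitimate.
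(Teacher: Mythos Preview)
Your treatment of parts 2 and 3 is correct and matches the paper (one small slip: in the boundary case both sides of $Q_{n+1}^{n+1}=Q_{n}^{n}\lambda_{ji}+\lambda_{ji}^{n}$ evaluate to $(n+1)\lambda_{ji}^{n}$, not to $\lambda_{ji}^{n}$).

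For part 1 there is an arithmetic error that derails your plan. Substituting $n\mapsto n+1$ in the definition gives
\[
P_{k}^{n+1}=\sum_{t=1}^{k}\binom{n-k+t}{\,n-k+1\,}\lambda_{ji}^{t-1}\lambda_{ij}^{k-t},
\]
with upper index $n-k+t$, not $n-k+t+1$ as you wrote. With the correct expression, the comparison is much simpler than you anticipate: the terms $t=1,\dots,k-1$ of $P_{k}^{n+1}$ coincide \emph{exactly} with those of $P_{k-1}^{n}\lambda_{ij}$ (same binomial coefficient $\binom{n-k+t}{n-k+1}$, same monomial), so no Pascal splitting is needed term by term. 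The remaining $t=k$ summand is $\binom{n}{\,n-k+1\,}\lambda_{ji}^{k-1}=\binom{n}{k-1}\lambda_{ji}^{k-1}=Q_{k}^{n}$ by the symmetry $\binom{a+b}{a}=\binom{a+b}{b}$, and the identity follows at once. This is precisely the paper's argument: the only binomial fact used in part 1 is that symmetry, not Pascal's rule. Your proposed Pascal decomposition, as written, would produce a residual sum of terms $\binom{n-k+t}{n-k}\lambda_{ji}^{t-1}\lambda_{ij}^{k-t}$ that cannot be absorbed by the single term $Q_{k}^{n}$, so the bookkeeping you flagged as delicate is in fact a symptom of the index error rather than a genuine obstacle.
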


\begin{proof}
\begin{enumerate}
    \item As is well known, ${a+b\choose a}= {a+b\choose b}$, for all $a,b\in\mathbb{N}$, which implies the equalities
    {\begin{align*}
        P_{k-1}^{n}\lambda_{ij}+Q_{k}^{n}&=\sum_{t=1}^{k-1} {{n-k+1+t-1\choose n-k+1}} \lambda_{ji}^{t-1}\lambda_{ij}^{k-1-t+1}+{n\choose k-1} \lambda_{ji}^{k-1}\\
        &=\sum_{t=1}^{k-1} {{n+1-k+t-1\choose n+1-k}} \lambda_{ji}^{t-1}\lambda_{ij}^{k-t}\\
        &\quad +{n+1-k+k-1\choose k-1} \lambda_{ji}^{k-1}\\
        &=\sum_{t=1}^{k} {{n+1-k+t-1\choose n+1-k}} \lambda_{ji}^{t-1}\lambda_{ij}^{k-t}=P_{k}^{n+1},
    \end{align*}}
    and 
    \begin{align*}
        P_{n}^{n}\lambda_{ij}+\lambda_{ji}^{n}=\sum_{t=1}^{n}\lambda_{ji}^{t-1}\lambda_{ij}^{n-t+1}+\lambda_{ji}^{n}=P_{n+1}^{n+1}.
    \end{align*}
    \item Since ${n\choose k-2}+{n\choose k-1}={n+1 \choose k-1}$, for all $n,k\in\mathbb{N}$, we can assert that 
    \begin{align*}
        Q_{k-1}^{n}\lambda_{ji}+Q_{k}^{n}&= {n \choose k-2}\lambda_{ji}^{k-2}\lambda_{ji}+{n\choose k-1}\lambda_{ji}^{k-1}\\&=\biggl({n\choose k-2}+{n\choose k-1}\biggr)\lambda_{ji}^{k-1}={n+1 \choose k-1}\lambda_{ji}^{k-1}=Q_{k}^{n+1},
    \end{align*}
    and also,
    \begin{align*}
        Q_{n}^{n}\lambda_{ji}+\lambda_{ji}^{n}=\biggl({n\choose n-1}+{n \choose n}\biggr)\lambda_{ji}^{n}={n+1 \choose n}\lambda_{ji}^{n}=Q_{n+1}^{n+1}.
    \end{align*}
    \item It follows from a short computation.
\end{enumerate}
\end{proof}

\begin{proposition}\label{conmutaciondiffder}
If $D_i$ and $D_{j}$ are generators of the diffusion algebra $\mathcal{D}$ as in Definition \ref{difftipo1}, then for $n\geq 1$, 
{\small\begin{align}\label{conmutadora}
     \lambda_{ij}^{n}D_{i}^{n}D_{j}&= \lambda_{ji}^{n}D_{j}D_{i}^{n}+\sum_{k=1}^{n}(-1)^{k+n}\ P_{k}^{n}\ x_{i}^{n-k}x_{j}\ D_{i}^{k}+(-1)^{n+k-1}\ Q_{k}^{n}\ x_{i}^{n-k+1}\ D_{j}D_{i}^{k-1}.
\end{align}}
\end{proposition}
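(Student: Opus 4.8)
The plan is to prove the commutation formula \eqref{conmutadora} by induction on $n$, using the defining relation \eqref{equationsrule} together with the recurrences for $P_k^n$ and $Q_k^n$ established in Lemma \ref{combin1}. First I would verify the base case $n=1$. Rewriting \eqref{equationsrule} as $\lambda_{ij}D_iD_j = \lambda_{ji}D_jD_i + x_jD_i - x_iD_j$ and comparing with \eqref{conmutadora} at $n=1$, the two terms in the sum correspond to $k=1$, and since $P_1^1 = Q_1^1 = 1$ by part 3 of Lemma \ref{combin1}, the formula reduces precisely to the defining relation. This settles the base case.

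Next I would carry out the inductive step. Assuming \eqref{conmutadora} holds for $n$, I would multiply both sides on the left by $\lambda_{ij}D_i$ to obtain an expression for $\lambda_{ij}^{n+1}D_i^{n+1}D_j$. The key manipulation is to push the factor $D_i$ through each term. The term $\lambda_{ji}^n D_j D_i^n$ becomes $\lambda_{ji}^n \lambda_{ij}^{-1}(\lambda_{ij}D_iD_j)D_i^n$ after commuting; one applies the base relation $\lambda_{ij}D_iD_j = \lambda_{ji}D_jD_i + x_jD_i - x_iD_j$ to convert $D_iD_j$ back into the ordered form, producing a $\lambda_{ji}^{n+1}D_jD_i^{n+1}$ piece plus lower-order corrections involving $x_i, x_j$. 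Each term $x_i^{n-k}x_j D_i^k$ and $x_i^{n-k+1}D_jD_i^{k-1}$ gets an extra $D_i$ on the left, which must be commuted past the scalar coefficients $x_i, x_j \in \mathbb{K}$ (these are central scalars, so they pass freely) and then the emerging $D_iD_j$ factors must again be reduced using the base relation. After collecting all contributions, I would group the resulting terms by their monomial type $x_i^{n+1-k'}x_j D_i^{k'}$ and $x_i^{n+2-k'}D_jD_i^{k'-1}$.

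The heart of the argument is matching coefficients. The recurrences in Lemma \ref{combin1}, namely $P_k^{n+1} = P_{k-1}^n\lambda_{ij} + Q_k^n$ and $Q_k^{n+1} = Q_{k-1}^n\lambda_{ji} + Q_k^n$, together with the boundary identities $P_{n+1}^{n+1} = P_n^n\lambda_{ij} + \lambda_{ji}^n$ and $Q_{n+1}^{n+1} = Q_n^n\lambda_{ji} + \lambda_{ji}^n$, are engineered to be exactly what is needed so that the coefficient of each monomial after the reduction equals $(-1)^{k+n+1}P_k^{n+1}$ or $(-1)^{n+k}Q_k^{n+1}$, as demanded by \eqref{conmutadora} at level $n+1$. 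I expect the main obstacle to be this bookkeeping: tracking how the two families of terms in the inductive hypothesis each split and contribute to both families at the next level, ensuring that the $P$-contributions and $Q$-contributions recombine correctly and that the sign factors $(-1)^{k+n}$ and $(-1)^{n+k-1}$ shift consistently. Careful attention must be paid to the boundary terms (the new top-degree term in $D_i^{n+1}$ and the $D_jD_i^{n+1}$ term) where the special identities for $P_{n+1}^{n+1}$ and $Q_{n+1}^{n+1}$ apply rather than the generic recurrences. Once the coefficient matching is verified term by term, the induction closes and \eqref{conmutadora} follows for all $n \geq 1$.
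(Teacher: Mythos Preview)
Your proposal is correct and follows essentially the same approach as the paper's proof: induction on $n$, with the base case reducing to the defining relation \eqref{equationsrule}, and the inductive step obtained by left-multiplying by $\lambda_{ij}D_i$, applying the base relation to each occurrence of $\lambda_{ij}D_iD_j$, and then invoking the recurrences of Lemma \ref{combin1} (including the boundary identities for $P_{n+1}^{n+1}$ and $Q_{n+1}^{n+1}$) to match coefficients. The paper carries out exactly this bookkeeping explicitly, separating the $k=n$ term from the first sum and the $k=1$ term from the second before re-indexing, which is just the detailed version of the coefficient-matching step you outline.
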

\begin{proof}
For $n=1$, it is clear that (\ref{conmutadora}) coincides with (\ref{equationsrule}). We suppose that the assertion holds for a fixed $n\in \mathbb{N}$. Then,
{\footnotesize \begin{align*}
     \lambda_{ij}^{n+1}D_{i}^{n+1}D_{j}&=\lambda_{ij}D_{i}\biggl( \lambda_{ji}^{n}D_{j}D_{i}^{n}+\sum_{k=1}^{n}(-1)^{k+n}\ P_{k}^{n}\ x_{i}^{n-k}x_{j}\ D_{i}^{k}\\&+(-1)^{n+k-1}\ Q_{k}^{n}\ x_{i}^{n-k+1}\ D_{j}D_{i}^{k-1}\biggr)\\
     &= \lambda_{ji}^{n}(\lambda_{ji}D_jD_i+x_jD_i-x_iD_j)D_{i}^{n}+\sum_{k=1}^{n}\biggl[(-1)^{k+n}\ P_{k}^{n}\ x_{i}^{n-k}x_{j}\ \lambda_{ij}D_{i}^{k+1}\\&+(-1)^{n+k-1}\ Q_{k}^{n}\ x_{i}^{n-k+1}\ (\lambda_{ji}D_jD_i+x_jD_i-x_iD_j)D_{i}^{k-1}\biggr]\\
     &= \lambda_{ji}^{n+1}D_jD_i^{n+1}+\lambda_{ji}^{n}x_jD_i^{n+1}-\lambda_{ji}^{n}x_iD_jD_{i}^{n}\\&+\sum_{k=1}^{n}\biggl[(-1)^{k+n}\ P_{k}^{n}\ x_{i}^{n-k}x_{j}\ \lambda_{ij}D_{i}^{k+1}+(-1)^{n+k-1}\ Q_{k}^{n}\ x_{i}^{n-k+1}\ \lambda_{ji}D_jD_{i}^{k}\\&+(-1)^{n+k-1}\ Q_{k}^{n}\ x_{i}^{n-k+1}\ x_jD_{i}^{k}+(-1)^{n+k}\ Q_{k}^{n}\ x_{i}^{n-k+2}\ D_jD_{i}^{k-1}\biggr]\\
\end{align*}}
By the associative law of the sum, and separating the term $k=n$ of the first sum and the term $k=1$ of the second, we obtain the equalities
{\small \begin{align*}
     \lambda_{ij}^{n+1}D_{i}^{n+1}D_{j}&= \lambda_{ji}^{n+1}D_jD_i^{n+1}+\lambda_{ji}^{n}x_jD_i^{n+1}-\lambda_{ji}^{n}x_iD_jD_{i}^{n}\\
     &+  P_{n}^{n}\ x_{j}\ \lambda_{ij}D_{i}^{n+1}-\ Q_{n}^{n}\ x_{i}\ \lambda_{ji}D_jD_{i}^{n}\\&+\sum_{k=1}^{n-1}\biggl[(-1)^{k+n}\ P_{k}^{n}\ x_{i}^{n-k}x_{j}\ \lambda_{ij}D_{i}^{k+1}+(-1)^{n+k-1}\ Q_{k}^{n}\ x_{i}^{n-k+1}\ \lambda_{ji}D_jD_{i}^{k}\biggr]\\&+\sum_{k=2}^{n}\biggl[(-1)^{n+k-1}\ Q_{k}^{n}\ x_{i}^{n-k+1}\ x_jD_{i}^{k}+(-1)^{n+k}\ Q_{k}^{n}\ x_{i}^{n-k+2}\ D_jD_{i}^{k-1}\biggl]\\
     &+(-1)^{n}\ Q_{1}^{n}\ x_{i}^{n}\ x_jD_{i}+(-1)^{n+1}\ Q_{1}^{n}\ x_{i}^{n+1}\ D_j
     \end{align*}}
     
     or equivalently,
     
     {\small \begin{align*}
     &= \lambda_{ji}^{n+1}D_jD_i^{n+1}+(\lambda_{ji}^{n}+P_{n}^{n}\lambda_{ij})x_jD_i^{n+1}-(\lambda_{ji}^{n}+\lambda_{ji}Q_{n}^{n})x_iD_jD_{i}^{n}\\&+\sum_{k=2}^{n}\biggl[(-1)^{k-1+n}\ P_{k-1}^{n}\ x_{i}^{n-k+1}x_{j}\ \lambda_{ij}D_{i}^{k}\\&+(-1)^{n+k-2}\ Q_{k-1}^{n}\ x_{i}^{n-k+2}\ \lambda_{ji}D_jD_{i}^{k-1}\biggr]\\&+\sum_{k=2}^{n}\biggl[(-1)^{n+k-1}\ Q_{k}^{n}\ x_{i}^{n-k+1}\ x_jD_{i}^{k}+(-1)^{n+k}\ Q_{k}^{n}\ x_{i}^{n-k+2}\ D_jD_{i}^{k-1}\biggr]\\
     &+(-1)^{n}\ P_{1}^{n+1}\ x_{i}^{n}\ x_jD_{i}+(-1)^{n+1}\ Q_{1}^{n+1}\ x_{i}^{n+1}\ D_j\\
     &= \lambda_{ji}^{n+1}D_jD_i^{n+1}+P_{n+1}^{n+1}x_jD_i^{n+1}-Q_{n+1}^{n+1}x_iD_jD_{i}^{n}\\&+\sum_{k=2}^{n}\biggl[(-1)^{k-1+n}\ (P_{k-1}^{n}\lambda_{ij}+Q_{k}^{n})\ x_{i}^{n-k+1}x_{j}\ D_{i}^{k}\\&+(-1)^{n+k}\ (Q_{k-1}^{n}\lambda_{ji}+Q_{k}^{n})\ x_{i}^{n-k+2}\ D_jD_{i}^{k-1}\biggr]\\
     &+(-1)^{n}\ P_{1}^{n+1}\ x_{i}^{n}\ x_jD_{i}+(-1)^{n+1}\ Q_{1}^{n+1}\ x_{i}^{n+1}\ D_j\\
     &= \lambda_{ji}^{n+1}D_jD_i^{n+1}+P_{n+1}^{n+1}x_jD_i^{n+1}-Q_{n+1}^{n+1}x_iD_jD_{i}^{n}\\&+\sum_{k=2}^{n}\biggl[(-1)^{k-1+n}\ P_{k}^{n+1}\ x_{i}^{n-k+1}x_{j}\ D_{i}^{k}+(-1)^{n+k}\ Q_{k}^{n+1}\ x_{i}^{n-k+2}\ D_jD_{i}^{k-1}\biggr]\\
     &+(-1)^{n}\ P_{1}^{n+1}\ x_{i}^{n}\ x_jD_{i}+(-1)^{n+1}\ Q_{1}^{n+1}\ x_{i}^{n+1}\ D_j,
\end{align*}}
where the second equality is due to a substitution on the index of the first sum and Lemma \ref{lemac} in the last term, and both last equations are due to the distributivity and Lemma \ref{lemaa}, parts 1 and 2. This proves that the assumption is true for $n+1$, which concludes the proof.
\end{proof}

\begin{remark}
 Since in diffusion algebras type 2 the generators $x_i$'s are central elements, for these algebras Proposition \ref{conmutaciondiffder} holds.
\end{remark}

Using a similar reasoning to the proof presented in Proposition \ref{conmutaciondiffder}, we can prove the following result.

\begin{proposition}\label{conmutaciondiffizq}
If $D_i$ and $D_{j}$ are generators of the diffusion algebra $\mathcal{D}$, then for all $n\geq 1$,  
\begin{align*}
     \lambda_{ij}^{n}D_{i}D_{j}^{n}&= \lambda_{ji}^{n}D_{j}^{n}D_{i}+\sum_{k=1}^{n}  Q_{k}^{n}\ x_{j}^{n-k+1}\ D_{j}D_{i}^{k-1}-P_{k}^{n}\ x_{j}^{n-k}x_{i}\ D_{i}^{k}.
\end{align*}
\end{proposition}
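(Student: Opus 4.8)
The plan is to prove Proposition \ref{conmutaciondiffizq} by induction on $n$, in complete parallel with the proof of Proposition \ref{conmutaciondiffder}, exploiting the structural symmetry between the two identities. Observe that the target identity expresses $\lambda_{ij}^{n}D_iD_j^{n}$ in terms of $D_j^nD_i$ and lower-order correction terms, whereas Proposition \ref{conmutaciondiffder} expresses $\lambda_{ij}^{n}D_i^nD_j$ in terms of $D_jD_i^n$. The two are formally interchanged under the substitution $D_i\leftrightarrow D_j$, $x_i\leftrightarrow x_j$, and $\lambda_{ij}\leftrightarrow\lambda_{ji}$ applied to the defining relation (\ref{equationsrule}): indeed, rewriting (\ref{equationsrule}) as $\lambda_{ij}D_iD_j-\lambda_{ji}D_jD_i=x_jD_i-x_iD_j$, the roles of the two generators are symmetric up to swapping the indices and the two structure constants. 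Since the combinatorial quantities $P_k^n$ and $Q_k^n$ from Definition \ref{pyq} are themselves not symmetric in $\lambda_{ij},\lambda_{ji}$, I would first check carefully how they transform under the swap, and whether the stated identity uses the \emph{same} $P_k^n,Q_k^n$ or their swapped versions.

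First I would establish the base case $n=1$, where the claimed identity reads $\lambda_{ij}D_iD_j=\lambda_{ji}D_jD_i+Q_1^1\,x_j\,D_jD_i^{0}-P_1^1\,x_j^{0}x_i\,D_i$, i.e. $\lambda_{ij}D_iD_j=\lambda_{ji}D_jD_i+x_jD_j\cdot\text{(?)}-x_iD_i$; here I must reconcile this with (\ref{equationsrule}), using $P_1^1=Q_1^1=1$ from Lemma \ref{combin1}, part 3. The apparent presence of $x_jD_jD_i^{k-1}$ at $k=1$ giving $x_jD_j$ suggests the identity should be read with care, and I expect the natural reading to reduce exactly to (\ref{equationsrule}) once the $n=1$ summand is expanded. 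Then, assuming the formula for a fixed $n$, I would multiply on the \emph{right} by $D_j$ and push the factor through using (\ref{equationsrule}) in the form $\lambda_{ij}D_iD_j=\lambda_{ji}D_jD_i+x_jD_i-x_iD_j$, exactly mirroring how the inductive step in Proposition \ref{conmutaciondiffder} multiplies on the left by $\lambda_{ij}D_i$.

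The bookkeeping step is then to collect terms: after substitution, one obtains two sums indexed by $k$, from which I would separate the boundary terms ($k=n$ from one sum, $k=1$ from the other), reindex the remaining sum by a shift $k\mapsto k-1$ so that both sums run over a common range, and then merge coefficients. The recurrences $P_k^{n+1}=P_{k-1}^n\lambda_{ij}+Q_k^n$ and $Q_k^{n+1}=Q_{k-1}^n\lambda_{ji}+Q_k^n$ together with the top-degree relations $P_{n+1}^{n+1}=P_n^n\lambda_{ij}+\lambda_{ji}^n$ and $Q_{n+1}^{n+1}=Q_n^n\lambda_{ji}+\lambda_{ji}^n$ from Lemma \ref{combin1}, parts 1 and 2, are precisely what is needed to recombine the merged coefficients into the $n+1$ form, just as in the previous proof. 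I expect this to succeed verbatim with the index roles of $D_i$ and $D_j$ exchanged.

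The main obstacle will be getting the signs and the exact index placement consistent between the two propositions, since the correction terms in Proposition \ref{conmutaciondiffizq} carry $D_jD_i^{k-1}$ and $D_i^{k}$ rather than the $D_i^k$ and $D_jD_i^{k-1}$ of Proposition \ref{conmutaciondiffder}, and the sign pattern $(-1)^{k+n}$ present in the first identity appears to be absorbed into the ordering of terms in the second. Because both monomial families $D_jD_i^{k-1}$ and $D_i^k$ are already in the PBW-ordered form guaranteed by Definition \ref{difftipo1}, the right-hand side is automatically expressed in the linear basis, so no further normal-ordering is required and the induction closes. Rather than rederive every line, I would invoke the word-for-word symmetry with Proposition \ref{conmutaciondiffder} and only display the one or two lines where the swap $D_i\leftrightarrow D_j$ changes which recurrence of Lemma \ref{combin1} is applied, which is exactly the economy the phrase \emph{``using a similar reasoning''} in the statement anticipates.
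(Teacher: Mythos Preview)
Your approach is exactly what the paper does: the paper gives no detailed argument and simply states that the result follows by the same inductive reasoning as Proposition \ref{conmutaciondiffder}, and your plan---induct on $n$, multiply the hypothesis on the right by $D_j$, apply relation (\ref{equationsrule}), separate boundary terms, reindex, and recombine via the recurrences of Lemma \ref{combin1}---is precisely that reasoning. Your hesitation at the base case is well founded: as printed, the $n=1$ sum yields $x_jD_j-x_iD_i$ rather than $x_jD_i-x_iD_j$, so the monomials $D_jD_i^{k-1}$ and $D_i^{k}$ in the displayed identity are almost certainly misprints for $D_j^{k-1}D_i$ and $D_j^{k}$, after which the induction closes exactly as you describe.
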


\begin{remark} 
Propositions \ref{conmutaciondiffder} and \ref{conmutaciondiffizq} allow to show the following facts. For the inner derivations, $\partial_{i},\partial_{j}:\mathcal{D}\rightarrow\mathcal{D}$ defined by $\partial_{k}(a)=D_{k}a-aD_{k}$ with $k=i,j$ and $i<j$, we have that
 {\scriptsize \begin{align*}
    \lambda_{ij}^{n}\partial_{j}(D_{i}^{n})&=\lambda_{ij}^{n}D_{i}^{n}D_{j}-\lambda_{ij}^{n}D_{j}D_{i}^{n}\\
    &=(\lambda_{ji}^{n}-\lambda_{ij}^{n})D_{j}^{n}D_{i} + \sum_{k=1}^{n}(-1)^{k+n}\ P_{k}^{n}\ x_{i}^{n-k}x_{j}\ D_{i}^{k}+(-1)^{n+k-1}\ Q_{k}^{n}\ x_{i}^{n-k+1}\ D_{j}D_{i}^{k-1}.
\end{align*}}

which means that for the basic element $D_{j}^{m}D_{i}^{n}$, 
{\footnotesize\begin{align*}
    \lambda_{ij}^{n}\partial_{j}(D_{j}^{m}D_{i}^{n})&=D_{j}^{m}\lambda_{ij}^{n}\partial_{j}(D_{i}^{n})\\
    &=D_{j}^{m}[(\lambda_{ji}^{n}-\lambda_{ij}^{n})D_{j}^{n}D_{i}\\&+\sum_{k=1}^{n}(-1)^{k+n}\ P_{k}^{n}\ x_{i}^{n-k}x_{j}\ D_{i}^{k}+(-1)^{n+k-1}\ Q_{k}^{n}\ x_{i}^{n-k+1}\ D_{j}D_{i}^{k-1}]\\
    &=(\lambda_{ji}^{n}-\lambda_{ij}^{n})D_{j}^{m+n}D_{i}\\&+\sum_{k=1}^{n}(-1)^{k+n}\ P_{k}^{n}\ x_{i}^{n-k}x_{j}\ D_{j}^{m}D_{i}^{k}+(-1)^{n+k-1}\ Q_{k}^{n}\ x_{i}^{n-k+1}\ D_{j}^{m+1}D_{i}^{k-1}.
\end{align*}}

On the other hand,

{\footnotesize\begin{align*}
    \lambda_{ij}^{m}\partial_{i}(D_{j}^{m})&=\lambda_{ij}^{m}D_{j}^{m}D_{i}-\lambda_{ij}^{m}D_{i}D_{j}^{m}\\
    &=(\lambda_{ij}^{m}-\lambda_{ji}^{m})D_{j}^{m}D_{i}-\sum_{k=1}^{m}  \biggl[Q_{k}^{m}\ x_{j}^{m-k+1}\ D_{j}D_{i}^{k-1}-P_{k}^{m}\ x_{j}^{m-k}x_{i}\ D_{i}^{k}\biggr],
\end{align*}}

which implies that 

\begin{align*}
     \lambda_{ij}^{m}\partial_{i}(D_{j}^{m}D_{i}^{n})&=\lambda_{ij}^{m}\partial_{i}(D_{j}^{m})D_{i}^{n}\\
     &=(\lambda_{ij}^{m}-\lambda_{ji}^{m})D_{j}^{m}D_{i}^{n+1}\\&-\sum_{k=1}^{m}  \biggl[Q_{k}^{m}\ x_{j}^{m-k+1}\ D_{j}D_{i}^{n+k-1}-P_{k}^{m}\ x_{j}^{m-k}x_{i}\ D_{i}^{k+n}\biggr].
\end{align*}
\end{remark}

Diffusion algebras of $n$ generators are constructed in such a way that the subalgebras of three generators are also. As we can see in \cite{IPR01} and \cite{PT02}, diffusion algebras type 1 of three generators can be classified into 4 families, $A,B,C$, and $D$, and these in turn are divided into classes as shown below.

\begin{theorem}{\rm (\cite{PT02}, p. 3270)}\label{clases}
If $\mathcal{D}$ is a diffusion algebra type 1 generated by $D_i,D_j$ and $D_k$ with $i<j<k$, then $\mathcal{D}$ belongs to some of the following classes of diffusion algebras type 1:
\begin{enumerate}
    \item Class $A_{I}$: $\lambda_{ij}=\lambda_{ji}=\lambda_{ik}=\lambda_{ki}=\lambda_{jk}=\lambda_{kj}\not=0$, $x_i,x_j,x_k\in\mathbb{K}\setminus \{0\}$.
    \item Class $A_{II}$: $\lambda_{\alpha\beta}=g_{\alpha}-g_{\beta}$ for all $\alpha,\beta\in\{i,j,k\}$ with $\alpha<\beta$, where $g_i=\lambda_{ij}+\lambda_{jk}-\lambda_{ik}$, $g_j=\lambda_{jk}-\lambda_{ik}$ and $g_k=-\lambda_{ik}$; $\lambda_{\alpha \beta}\not=0$ if $\alpha<\beta$, and $\lambda_{\alpha \beta}=0$ if $\alpha>\beta$; $x_i,x_j,x_k\in\mathbb{K}\setminus \{0\}$.
    \item Class $B_{I}$: $\lambda_{ij}=\lambda_{jk}\not=0$, $\lambda_{ji}=\lambda_{kj}$, $\lambda_{\alpha\beta}-\lambda_{\beta\alpha}=\Lambda$, for all $\alpha,\beta\in\{i,j,k\}$ with $\alpha<\beta$ and for a fixed $\Lambda\in\mathbb{K}$,  $x_i,x_k\in\mathbb{K}\setminus \{0\}$, $x_j=0$.
    \item Class $B_{II}$: $\lambda_{kj}=\lambda_{ji}=0$, $\lambda_{ij},\lambda_{ik},\lambda_{jk}\in\mathbb{K}\setminus\{0\}$, $x_j=0$.
    \item Class $B_{III}$: $\lambda_{ki}=\lambda_{kj}=0$, $\lambda_{ij}\not=0$, $\lambda_{ij}-\lambda_{ji}=\lambda_{ik}-\lambda_{jk}$, $\lambda_{ik}\not\in\{0,\lambda_{ij}-\lambda_{ji}\}$, $x_{k}=0$.
    \item Class $B_{IV}$: $\lambda_{ji}=\lambda_{ki}=0$, $x_i=0$, $\lambda_{ik}-\lambda_{ij}=\lambda_{jk}-\lambda_{kj}$, $\lambda_{ik}\not\in \{0,\lambda_{ik}-\lambda_{ij}\}$.
    \item Class $C_{I}$: $\lambda_{ij}-\lambda_{ji}=\lambda_{ik}-\lambda_{ki}$, $\lambda_{ij},\lambda_{ik},\lambda_{jk}\in\mathbb{K}\setminus\{0\}$ $x_j=x_k=0$.
    \item Class $C_{II}$: $x_j=x_k=0$, $\lambda_{kj}=0$, $\lambda_{ij},\lambda_{jk},\lambda_{ik}\in\mathbb{K}\setminus\{0\}$. 
    \item Class $D$: $x_i=x_j=x_k=0$, $\lambda_{\alpha\beta}\not=0$ for all $\alpha,\beta\in\{i,j,k\}$ with $\alpha<\beta$. 
    \end{enumerate}
\end{theorem}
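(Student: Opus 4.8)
Since the statement is quoted from Pyatov and Twarock \cite{PT02}, the natural route is to recast the classification as a single confluence check for the defining relations and then to solve the resulting equations by a case analysis. The plan is as follows. First I would turn the relations into a rewriting system. Because $\lambda_{\alpha\beta}\in\mathbb{K}^{*}$ whenever $\alpha<\beta$, each relation (\ref{equationsrule}) can be solved for the out-of-order product, producing the rules
\[
D_{\alpha}D_{\beta}\longmapsto \frac{\lambda_{\beta\alpha}}{\lambda_{\alpha\beta}}D_{\beta}D_{\alpha}+\frac{x_{\beta}}{\lambda_{\alpha\beta}}D_{\alpha}-\frac{x_{\alpha}}{\lambda_{\alpha\beta}}D_{\beta},\qquad \alpha<\beta,
\]
which drive every monomial toward the ordered (decreasing-index) form $D_{k}^{a}D_{j}^{b}D_{i}^{c}$. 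By Bergman's Diamond Lemma, the ordered monomials form a $\mathbb{K}$-basis precisely when every overlap ambiguity of this system is resolvable, so for a fixed relation shape being a diffusion algebra is equivalent to confluence of these rules.

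Second, I would locate the ambiguities. For three generators $D_{i},D_{j},D_{k}$ with $i<j<k$ the only overlap is the word $D_{i}D_{j}D_{k}$: both its left factor $D_{i}D_{j}$ and its right factor $D_{j}D_{k}$ are reducible and share the letter $D_{j}$. The word $D_{i}D_{k}D_{j}$ produces no overlap, since $D_{k}D_{j}$ is already ordered; indeed, an overlap of two rules $D_{a}D_{b}$ and $D_{c}D_{d}$ requires $b=c$, forcing $a<b<d$, which among $\{i,j,k\}$ leaves only $D_{i}D_{j}D_{k}$. Hence a single confluence condition governs the whole classification. I would then reduce $D_{i}D_{j}D_{k}$ in the two possible orders---rewriting $D_{i}D_{j}$ first versus $D_{j}D_{k}$ first---push each all the way to the basis, and equate the coefficients of the monomials $D_{k}D_{j}D_{i}$, $D_{k}D_{j}$, $D_{k}D_{i}$, $D_{j}D_{i}$, $D_{i}$, $D_{j}$, $D_{k}$, $1$. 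The top-degree coefficient is the product of the three swap factors $\tfrac{\lambda_{ji}}{\lambda_{ij}}$, $\tfrac{\lambda_{ki}}{\lambda_{ik}}$, $\tfrac{\lambda_{kj}}{\lambda_{jk}}$ in either order, so it matches automatically; the content therefore sits in the lower-degree terms and yields a finite system of bilinear equations in the six $\lambda_{\alpha\beta}$ and the three $x_{\alpha}$.

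Finally, I would solve this system by a case split on which of the $x_{\alpha}$ and of the lower coefficients $\lambda_{\beta\alpha}$ (for $\alpha<\beta$) vanish. Each branch fixes the surviving parameters up to the stated linear constraints, and the branches match the families $A$--$D$ and their subclasses one by one; substituting each class back into the confluence condition confirms, conversely, that the ordered monomials are indeed linearly independent. The main obstacle is exactly this last step: the equations are nonlinear, and the vanishing of one parameter can force or forbid the vanishing of others, so keeping the case analysis exhaustive and nonredundant---checking that no spurious class appears and that none is missed---is where the genuine work lies, whereas the coefficient bookkeeping of the previous step, though lengthy, is purely mechanical.
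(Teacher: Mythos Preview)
The paper does not give its own proof of this theorem: it is stated as a quotation from \cite{PT02} (and ultimately \cite{IPR01}) and is followed immediately by a remark, with no argument supplied. So there is no in-paper proof to compare against; your task is really to reconstruct the proof of the cited result.

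Your plan is correct and is, in substance, exactly the route taken in the original references. In \cite{IPR01,PT02} the classification is obtained by imposing associativity on the cubic word $D_iD_jD_k$ (equivalently, resolvability of the unique overlap ambiguity in Bergman's Diamond Lemma), extracting the resulting bilinear constraints on the $\lambda_{\alpha\beta}$ and the $x_\alpha$, and then solving them by a case split on how many of the $x_\alpha$ vanish; the $A$, $B$, $C$, $D$ families correspond precisely to zero, one, two, and three vanishing $x$'s, with the roman-numeral subclasses arising from which of the reverse coefficients $\lambda_{\beta\alpha}$ ($\alpha<\beta$) are zero. Your observation that the leading $D_kD_jD_i$ coefficient cancels automatically and that the content lives in the lower-degree terms is exactly right, and your caveat that the delicate part is keeping the case analysis exhaustive is well placed: in the references this is handled by first stratifying on $|\{\alpha:x_\alpha=0\}|$ and only then branching on the $\lambda_{\beta\alpha}$, which keeps the tree manageable. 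One small addition worth making explicit in a write-up is the converse direction you mention at the end: for each listed class one should check (or cite) that the overlap does resolve, so that the ordered monomials really are a basis and the algebra is a diffusion algebra in the sense of Definition~\ref{difftipo1}.
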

 
\begin{remark}
From the results above, we conclude that diffusion algebras of type 1 for which $\lambda_{ji}\not=0$, with $i<j$, are skew polynomial algebras. In this way, the algebras presented in Theorem \ref{clases} must be also distributed in the classification of skew polynomial algebras of Theorem \ref{quasipolynomial}. More precisely,  diffusion algebras of class $C_{I}$ and $D$ are skew polynomial algebras of type \textit{2$(e)$} and \textit{1}, respectively, but the cases of diffusion algebras of classes $A_{I}$ and $B_{I}$ not belong to any of the list of Theorem \ref{quasipolynomial}. This mean that to consider the classes $A_{I}$ and $B_{I}$ as skew polynomial algebras, we need first make an identification (by establishing an automorphism of algebras) with, apparently, \textit{2$(a)$, 2$(b)$, 2$(e)$ 5$(a)$, 5$(d)$} or \textit{5$(e)$}. The rest of 3-degree diffusion algebras of type 1 are not skew polynomial algebras, at least not if we consider the same generators, by the fact that $\lambda_{ji}$'s cannot be equals to zero.
\end{remark}

We finish this section with an analysis of the imposibility of orthogonal pairs of derivations in diffusion algebras type 2. Let $\text{Aut}_L(\mathcal{D})$ be the set of automorphisms of $\mathbb{K}$-algebras of $\mathcal{D}$, where for an element $\sigma\in \text{Aut}_L(\mathcal{D})$ the relation  $\sigma(\{D_1,D_2,x_1,x_2\})\subseteq \mathbb{K}D_1+\mathbb{K}D_2+\mathbb{K}x_1+\mathbb{K}x_2+\mathbb{K}$ holds. Consider $\sigma\in \text{Aut}_L(\mathcal{D})$ and $A_\alpha,B_\alpha,S_\alpha,H_\alpha\in \mathbb{K}$, for $\alpha\in\{D1,D2,x1,x2,k\}$. The coefficients of $\sigma(D_1)$, $\sigma(D_2)$, $\sigma(x_1)$ and $\sigma(x_2)$ are given by
\begin{align}
\label{aut}
    \sigma(D_1)&=A_{D1}D_1+A_{D2}D_2+A_{x1}x_1+A_{x2}x_2+A_k,\\
    \sigma(D_2)&=B_{D1}D_1+B_{D2}D_2+B_{x1}x_1+B_{x2}x_2+B_k,\nonumber\\
    \sigma(x_1)&=S_{D1}D_1+S_{D2}D_2+S_{x1}x_1+S_{x2}x_2+S_k,\nonumber\\
    \sigma(x_2)&=H_{D1}D_1+H_{D2}D_2+H_{x1}x_1+H_{x2}x_2+H_k,\nonumber
\end{align}

which motivates the following result.

\begin{proposition}\label{propdet}
If $\sigma:\mathcal{D}\rightarrow\mathcal{D}$ is an automorphism defined as in (\ref{aut}), then ${\rm det}(A)\not=0$, where
\begin{equation}\label{laseñorasepegoenlacabeza}
    A=\left(\begin{tabular}{cccc}
        $A_{D1}$ & $B_{D1}$ & $S_{D1}$ & $H_{D1}$ \\
        $A_{D2}$ & $B_{D2}$ & $S_{D2}$ & $H_{D2}$\\
        $A_{x1}$ & $B_{x1}$ & $S_{x1}$& $H_{x1}$\\
        $A_{x2}$ & $B_{x2}$ & $S_{x2}$& $H_{x2}$
     \end{tabular}\right).
\end{equation}\end{proposition}

\begin{proof}
Suppose that $\text{det}(A)=0$. Then, the set $\{v_1,v_2,v_3,v_4\}$ of columns of $A$\footnote{If we say that $D_{1}$ is the first generator $a_1$, $D_{2}$ is $a_2$, $x_{1}$ is  $a_3$ and $x_{2}$ is $a_4$, we mean that $A_i$ is the vector of coefficients of degree one of $\sigma(a_i)$.} is linear dependent, i.e., there exist elements $\alpha_i\in\mathbb{K}$ with $i\in\{1,2,3,4\}$, such that $v_{\beta(1)}=\alpha_2v_{\beta(2)}+\alpha_3v_{\beta(3)}+\alpha_4v_{\beta(4)}$, where $\beta$ is a permutation of $\{1,2,3,4\}$. Then, $ \sigma(v_{\beta(1)}+\epsilon)=\alpha_2\sigma(v_{\beta(2)})+\alpha_3\sigma(v_{\beta(3)})+\alpha_4\sigma(v_{\beta(4)})$, where $\epsilon=\alpha_2 k_{\beta(2)}+\alpha_3k_{\beta(3)}+\alpha_4k_{\beta(4)}-k_{\beta(1)}$, with $k_1=A_k$, $k_2=B_k$, $k_3=S_k$ and $k_4=H_k$. Since $v_{\beta(1)}+\epsilon\not=\alpha_2v_{\beta(2)}+\alpha_3v_{\beta(3)}+\alpha_4v_{\beta(4)}$, we obtain that $\sigma$ is not injective, which is a contradiction. Thus, $\text{det}(A)$ is different from zero.
\end{proof}

For the next proposition, consider the following terms that can be obtained using the expression (\ref{aut}):
{\tiny
\begin{align*}
        &\lambda_{12}\sigma(D_{1})\sigma(D_{2})\\&=\lambda_{12}(A_{D1}D_{1}+A_{D2}D_{2}+A_{x1}x_{1}+A_{x2}x_{2}+A_k)(B_{D1}D_{1}+B_{D2}D_{2}+B_{x1}x_{1}+B_{x2}x_{2}+B_k)\\
        &=\lambda_{12}A_{D1}B_{D1}D_{1}^{2}+A_{D1}B_{D2}(\lambda_{21}D_{2}D_{1}+x_{2}D_{1}-x_{1}D_{2})+\lambda_{12}A_{D1}B_{x1}D_{1}x_{1}+\lambda_{12}A_{D1}B_{x2}D_{1}x_{2}\\
        &+\lambda_{12}A_{D1}B_{k}D_{1}+\lambda_{12}A_{D2}B_{D1}D_{2}D_{1}+\lambda_{12}A_{D2}B_{D2}D_{2}^{2}+\lambda_{12}A_{D2}B_{x1}D_{2}x_{1}+\lambda_{12}A_{D2}B_{x2}D_{2}x_{2}\\
        &+\lambda_{12}A_{D2}B_{k}D_{2}+\lambda_{12}A_{x1}B_{D1}x_{1}D_{1}+\lambda_{12}A_{x1}B_{D2}x_{1}D_{2}+\lambda_{12}A_{x1}B_{x1}x_{1}^{2}+\lambda_{12}A_{x1}B_{x2}x_{1}x_{2}\\
        &+\lambda_{12}A_{x1}B_{k}x_{1}+\lambda_{12}A_{x2}B_{D1}x_{2}D_{1}+\lambda_{12}A_{x2}B_{D2}x_{2}D_{2}+\lambda_{12}A_{x2}B_{x1}x_{2}x_{1}+\lambda_{12}A_{x2}B_{x2}x_{2}^{2}\\
        &+\lambda_{12}A_{x2}B_{k}x_{2}+\lambda_{12}A_{k}B_{D1}D_{1}+\lambda_{12}A_{k}B_{D2}D_{2}+\lambda_{12}A_{k}B_{x1}x_{1}+\lambda_{12}A_{k}B_{x2}x_{2}+\lambda_{12}A_{k}B_{k},\\\\
        &\lambda_{21}\sigma(D_{2})\sigma(D_{1})\\&=\lambda_{12}(B_{D1}D_{1}+B_{D2}D_{2}+B_{x1}x_{1}+B_{x2}x_{2}+B_k)(A_{D1}D_{1}+A_{D2}D_{2}+A_{x1}x_{1}+A_{x2}x_{2}+A_k)\\
        &=\lambda_{21}B_{D1}A_{D1}D_{1}^{2}+\lambda_{21}B_{D1}A_{D2}\lambda_{12}^{-1}(\lambda_{21}D_{2}D_{1}+x_{2}D_{1}-x_{1}D_{2})+\lambda_{21}B_{D1}A_{x1}D_{1}x_{1}+\lambda_{21}B_{D1}A_{x2}D_{1}x_{2}\\
        &+\lambda_{21}B_{D1}A_{k}D_{1}+\lambda_{21}B_{D2}A_{D1}D_{2}D_{1}+\lambda_{21}B_{D2}A_{D2}D_{2}^{2}+\lambda_{21}B_{D2}A_{x1}D_{2}x_{1}+\lambda_{21}B_{D2}A_{x2}D_{2}x_{2}\\
        &+\lambda_{21}B_{D2}A_{k}D_{2}+\lambda_{21}B_{x1}A_{D1}x_{1}D_{1}+\lambda_{21}B_{x1}A_{D2}x_{1}D_{2}+\lambda_{21}B_{x1}A_{x1}x_{1}^{2}+\lambda_{21}B_{x1}A_{x2}x_{1}x_{2}\\
        &+\lambda_{21}B_{x1}A_{k}x_{1}+\lambda_{21}B_{x2}A_{D1}x_{2}D_{1}+\lambda_{21}B_{x2}A_{D2}x_{2}D_{2}+\lambda_{21}B_{x2}A_{x1}x_{2}x_{1}+\lambda_{21}B_{x2}A_{x2}x_{2}^{2}\\
        &+\lambda_{21}B_{x2}A_{k}x_{2}+\lambda_{21}B_{k}A_{D1}D_{1}+\lambda_{21}B_{k}A_{D2}D_{2}+\lambda_{21}B_{k}A_{x1}x_{1}+\lambda_{21}B_{k}A_{x2}x_{2}+\lambda_{21}B_{k}A_{k},\\\\
        &\sigma(x_{2})\sigma(D_{1})\\&=(H_{D1}D_{1}+H_{D2}D_{2}+H_{x1}x_{1}+H_{x2}x_{2}+H_k)(A_{D1}D_{1}+A_{D2}D_{2}+A_{x1}x_{1}+A_{x2}x_{2}+A_k)\\
        &=H_{D1}A_{D1}D_{1}^{2}+H_{D1}A_{D2}\lambda_{12}^{-1}(\lambda_{21}D_{2}D_{1}+x_{2}D_{1}-x_{1}D_{2})+H_{D1}A_{x1}D_{1}x_{1}+H_{D1}A_{x2}D_{1}x_{2}\\
        &+H_{D1}A_{k}D_{1}+H_{D2}A_{D1}D_{2}D_{1}+H_{D2}A_{D2}D_{2}^{2}+H_{D2}A_{x1}D_{2}x_{1}+H_{D2}A_{x2}D_{2}x_{2}\\
        &+H_{D2}A_{k}D_{2}+H_{x1}A_{D1}x_{1}D_{1}+H_{x1}A_{D2}x_{1}D_{2}+H_{x1}A_{x1}x_{1}^{2}+H_{x1}A_{x2}x_{1}x_{2}\\
        &+H_{x1}A_{k}x_{1}+H_{x2}A_{D1}x_{2}D_{1}+H_{x2}A_{D2}x_{2}D_{2}+H_{x2}A_{x1}x_{2}x_{1}+H_{x2}A_{x2}x_{2}^{2}\\
        &+H_{x2}A_{k}x_{2}+H_{k}A_{D1}D_{1}+H_{k}A_{D2}D_{2}+H_{k}A_{x1}x_{1}+H_{k}A_{x2}x_{2}+H_{k}A_{k},\\\\
        &\sigma(x_{1})\sigma(D_{2})\\&=(S_{D1}D_{1}+S_{D2}D_{2}+S_{x1}x_{1}+S_{x2}x_{2}+S_k)(B_{D1}D_{1}+B_{D2}D_{2}+B_{x1}x_{1}+B_{x2}x_{2}+B_k)\\
        &=S_{D1}B_{D1}D_{1}^{2}+S_{D1}B_{D2}\lambda_{12}^{-1}(\lambda_{21}D_{2}D_{1}+x_{2}D_{1}-x_{1}D_{2})+S_{D1}B_{x1}D_{1}x_{1}+S_{D1}B_{x2}D_{1}x_{2}+S_{D1}B_{k}D_{1}\\
        &+S_{D2}B_{D1}D_{2}D_{1}+S_{D2}B_{D2}D_{2}^{2}+S_{D2}B_{x1}D_{2}x_{1}+S_{D2}B_{x2}D_{2}x_{2}+S_{D2}B_{k}D_{2}\\
        &+S_{x1}B_{D1}x_{1}D_{1}+S_{x1}B_{D2}x_{1}D_{2}+S_{x1}B_{x1}x_{1}^{2}+S_{x1}B_{x2}x_{1}x_{2}+S_{x1}B_{k}x_{1}\\
        &+S_{x2}B_{D1}x_{2}D_{1}+S_{x2}B_{D2}x_{2}D_{2}+S_{x2}B_{x1}x_{2}x_{1}+S_{x2}B_{x2}x_{2}^{2}+S_{x2}B_{k}x_{2}\\
        &+S_{k}B_{D1}D_{1}+S_{k}B_{D2}D_{2}+S_{k}B_{x1}x_{1}+S_{k}B_{x2}x_{2}+S_{k}B_{k}.
    \end{align*}}

\begin{proposition}\label{determinante}
If $\sigma:\mathcal{D}\rightarrow\mathcal{D}$ is an automorphism defined as in {\rm (\ref{aut})}, then $A_k=B_k=S_k=H_k=0$.
\end{proposition}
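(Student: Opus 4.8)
The plan is to use that $\mathcal{D}$ is a connected $\mathbb{N}$-graded algebra. Assigning degree $1$ to each of $D_1, D_2, x_1, x_2$, every defining relation of a diffusion algebra type $2$ on two pairs of generators, namely relation (\ref{relations}) together with the relations stating that $x_1$ and $x_2$ are central, is homogeneous of degree $2$. Hence $\mathcal{D}=\bigoplus_{d\geq 0}\mathcal{D}_d$ with $\mathcal{D}_0=\mathbb{K}$ and $\mathcal{D}_1=\mathbb{K}D_1\oplus\mathbb{K}D_2\oplus\mathbb{K}x_1\oplus\mathbb{K}x_2$, and passing an element to its homogeneous components is well defined. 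For each generator $g$ I write $\sigma(g)=\sigma_1(g)+\sigma_0(g)$ with $\sigma_1(g)\in\mathcal{D}_1$ the linear part and $\sigma_0(g)\in\mathbb{K}$ the constant term, so that $\sigma_0(D_1)=A_k$, $\sigma_0(D_2)=B_k$, $\sigma_0(x_1)=S_k$, $\sigma_0(x_2)=H_k$, while $\sigma_1$ is the linear endomorphism of $\mathcal{D}_1$ whose matrix is the matrix $A$ of Proposition \ref{propdet}.

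First I would apply $\sigma$ to relation (\ref{relations}) with $i=1$ and $j=2$, obtaining the identity
\[
\lambda_{12}\sigma(D_1)\sigma(D_2)-\lambda_{21}\sigma(D_2)\sigma(D_1)-\sigma(x_2)\sigma(D_1)+\sigma(x_1)\sigma(D_2)=0
\]
in $\mathcal{D}$, whose four products are expanded immediately before the statement. Since reduction to the PBW basis preserves the grading, I would then extract the homogeneous component of degree $1$. For a product $\sigma(a)\sigma(b)$ only the cross terms contribute in degree $1$, so its degree-$1$ part equals $\sigma_0(a)\sigma_1(b)+\sigma_0(b)\sigma_1(a)$; collecting the four products gives
\[
[(\lambda_{12}-\lambda_{21})B_k-H_k]\,\sigma_1(D_1)+[(\lambda_{12}-\lambda_{21})A_k+S_k]\,\sigma_1(D_2)+B_k\,\sigma_1(x_1)-A_k\,\sigma_1(x_2)=0.
\]

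The decisive step is that, by Proposition \ref{propdet}, $\det(A)\neq 0$, so $\sigma_1$ is invertible and the images $\sigma_1(D_1),\sigma_1(D_2),\sigma_1(x_1),\sigma_1(x_2)$ form a $\mathbb{K}$-basis of $\mathcal{D}_1$. Therefore each coefficient in the last display must vanish. The coefficients of $\sigma_1(x_2)$ and $\sigma_1(x_1)$ yield $A_k=0$ and $B_k=0$ at once, and substituting these into the coefficients of $\sigma_1(D_2)$ and $\sigma_1(D_1)$ forces $S_k=0$ and $H_k=0$, which is the claim.

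I expect the only real difficulty to be bookkeeping: isolating the degree-$1$ component cleanly from the four lengthy products, in particular making sure that the genuinely quadratic contributions (such as those produced by rewriting $D_1D_2$ through (\ref{relations})) do not leak into degree $1$, and then arranging the coefficients so that the invertibility of $A$ applies. Conceptually the whole argument rests on just two inputs: the homogeneity of the defining relations, which provides the grading and hence the legitimacy of extracting the degree-$1$ part, and Proposition \ref{propdet}, which upgrades ``a linear combination of the $\sigma_1$-images vanishes'' to ``each constant term vanishes.'' Note in particular that centrality of $\sigma(x_1)$ and $\sigma(x_2)$ is not needed for this conclusion.
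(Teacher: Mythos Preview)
Your argument is correct and follows the same overall strategy as the paper: apply $\sigma$ to relation~(\ref{relations}), isolate the degree-$1$ component (legitimate because $\mathcal{D}$ is graded with homogeneous relations), and invoke Proposition~\ref{propdet}. The difference lies in the final linear-algebra step. The paper expands the degree-$1$ part in the standard basis $D_1,D_2,x_1,x_2$, obtaining a $4\times 4$ homogeneous system $\Gamma\overline{x}=0$ in the unknowns $A_k,B_k,S_k,H_k$, and then performs a sequence of column operations to show $\det(\Gamma)=\det(A)\neq 0$. You instead express the degree-$1$ part as a combination of $\sigma_1(D_1),\sigma_1(D_2),\sigma_1(x_1),\sigma_1(x_2)$; since $\det(A)\neq 0$ makes these four vectors a basis of $\mathcal{D}_1$, the coefficients vanish immediately and the determinant computation disappears. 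Your organization is more transparent and shorter, but the substance---grading plus invertibility of the linear part of $\sigma$---is identical.
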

\begin{proof} As we must guarantee that (\ref{relations}) is respected by $\sigma$, we need to verify that the following relation holds:
\begin{equation}\label{automorphismrule2}
      \lambda_{12}\sigma(D_{1})\sigma(D_{2})-\lambda_{21}\sigma(D_{2})\sigma(D_{1})-\sigma(x_{2})\sigma(D_{1})+\sigma(x_{1})\sigma(D_{2})=0.
\end{equation}

Therefore, we should have that the coefficients of the terms of degree one $D_{1},D_{2}, x_{1}$ and $x_{2}$ must be zero (this due to the fact that $\mathcal{D}$ is a quadratic algebra). With this in mind, we vanish the coefficients of the elements $D_{1},D_{2},x_{1}$ and $x_{2}$ in expression (\ref{automorphismrule2}), whence we obtain the following equations:
{\tiny \begin{align*}
 D_{1}: \ \ \    
 &\lambda_{12}A_{D1}B_{k} +\lambda_{12}A_{k}B_{D1}-\lambda_{21}B_{D1}A_{k}-\lambda_{21}B_{k}A_{D1}-H_{D1}A_{k}-H_{k}A_{D1}+S_{D1}B_{k}+S_{k}B_{D1}=0\\
 D_{2}: \ \ \   
 &\lambda_{12}A_{D2}B_{k}+\lambda_{12}A_{k}B_{D2}-\lambda_{21}B_{D2}A_{k}-\lambda_{21}B_{k}A_{D2}-H_{D2}A_{k}-H_{k}A_{D2}+S_{D2}B_{k}+S_{k}B_{D2}=0\\
 x_{1}:\ \ \ & \  \lambda_{12}A_{x1}B_{k}+\lambda_{12}A_{k}B_{x1}-\lambda_{21}B_{x1}A_{k}-\lambda_{21}B_{k}A_{x1}-H_{x1}A_{k}-H_{k}A_{x1}+S_{x1}B_{k}+S_{k}B_{x1}=0\\
 x_{2}:\ \ \ & \   \lambda_{12}A_{x2}B_{k}+\lambda_{12}A_{k}B_{x2}-\lambda_{21}B_{x2}A_{k}-\lambda_{21}B_{k}A_{x2}-H_{x2}A_{k}-H_{k}A_{x2}+S_{x2}B_{k}+S_{k}B_{x2}=0\\
    k:\ \ \ &\  \lambda_{12}A_{k}B_{k}-\lambda_{21}B_{k}A_{k}-H_{k}A_{k}+S_{k}B_{k}=0.
\end{align*}}

\noindent From the algebraic properties in $\mathbb{K}$, we obtain the following equalities:
{\scriptsize \begin{align*}
    D_{1}&: \ \ \ \ \ \ [(\lambda_{12}-\lambda_{21})B_{D1}-H_{D1}]A_{k}+[(\lambda_{12}-\lambda_{21})A_{D1}+S_{D1}]B_{k}+B_{D1}S_{k}-A_{D1}H_{k}=0,\\    D_{2}&: \ \ \ \ \ \ [(\lambda_{12}-\lambda_{21})B_{D2}-H_{D2}]A_{k}+[(\lambda_{12}-\lambda_{21})A_{D2}+S_{D2}]B_{k}+B_{D2}S_{k}-A_{D2}H_{k}=0,\\
    x_{1}&: \ \ \ \ \ \ [(\lambda_{12}-\lambda_{21})B_{x1}-H_{x1}]A_{k}+[(\lambda_{12}-\lambda_{21})A_{x1}+S_{x1}]B_{k}+B_{x1}S_{k}-A_{x1}H_{k}=0,\\
    x_{2}&: \ \ \ \ \ \ [(\lambda_{12}-\lambda_{21})B_{x2}-H_{x2}]A_{k}+[(\lambda_{12}-\lambda_{21})A_{x2}+S_{x2}]B_{k}+B_{x2}S_{k}-A_{x2}H_{k}=0,\\
    k&:\ \ \ \ \ \ (\lambda_{12}-\lambda_{21})A_kB_k+S_kB_k-H_kA_k=0,
\end{align*}}

Equivalently, the equations obtained in the generators $D_{1}, D_{2}, x_{1}$ and $x_{2}$ can be expressed as the linear system $\Gamma \overline{x}=0$, where
{\tiny \begin{equation*}
    \Gamma=\left(\begin{tabular}{cccc}
    $[(\lambda_{12}-\lambda_{21})B_{D1}-H_{D1}]$&$[(\lambda_{12}-\lambda_{21})A_{D1}+S_{D1}]$&$B_{D1}$&$-A_{D1}$\\
          $[(\lambda_{12}-\lambda_{21})B_{D2}-H_{D2}]$&$[(\lambda_{12}-\lambda_{21})A_{D2}+S_{D2}]$&$B_{D2}$&$-A_{D2}$\\
         $[(\lambda_{12}-\lambda_{21})B_{x1}-H_{x1}]$&$[(\lambda_{12}-\lambda_{21})A_{x1}+S_{x1}]$&$B_{x1}$&$-A_{x1}$\\
         $[(\lambda_{12}-\lambda_{21})B_{x2}-H_{x2}]$&$[(\lambda_{12}-\lambda_{21})A_{x2}+S_{x2}]$&$B_{x2}$&$-A_{x2}$
     \end{tabular}\right),\ \ 
     \overline{x}=\left(\begin{tabular}{c}
          $A_k$ \\ $B_k$ \\ $S_k$ \\ $H_k$
     \end{tabular}\right).
\end{equation*}}
By the properties of the determinant function, we get $\text{det}(\Gamma)=\text{det}(A)$, see (\ref{laseñorasepegoenlacabeza}). In this way, Proposition \ref{propdet} implies that $\text{det}(A)\not= 0$, whence the system $\Gamma \overline{x}=0$ has the unique solution $\overline{x}=0$, that is, $A_k=B_k=S_k=H_k=0$.
\end{proof}

Now, for $\sigma:\mathcal{D}\rightarrow\mathcal{D}$ defined by (\ref{aut}), and for a $\sigma$-derivation $\partial:\mathcal{D}\rightarrow\mathcal{D}$ defined on basic elements as follows
\begin{equation}\label{deri}
    \partial(D_1)=a_{D1}D_1+a_{D2}D_2+a_{x1}x_1+a_{x2}x_2+a_k,
\end{equation}
\begin{equation*}
    \partial(D_2)=b_{D1}D_1+b_{D2}D_2+b_{x1}x_1+b_{x2}x_2+b_k,
\end{equation*}
\begin{equation*}
    \partial(x_1)=c_{D1}D_1+c_{D2}D_2+c_{x1}x_1+c_{x2}x_2+c_k,
\end{equation*}
\begin{equation*}
    \partial(x_2)=d_{D1}D_1+d_{D2}D_2+d_{x1}x_1+d_{x2}x_2+d_k,
\end{equation*}

we can check that

{\tiny
\begin{align*}
    &\lambda_{12}\partial(D_{1})\sigma(D_{2})\\&=\lambda_{12}(a_{D1}D_{1}+a_{D2}D_{2}+a_{x1}x_{1}+a_{x2}x_{2}+a_k)(B_{D1}D_{1}+B_{D2}D_{2}+B_{x1}x_{1}+B_{x2}x_{2}+B_k)\\
    &=\lambda_{12}a_{D1}B_{D1}D_{1}^2+a_{D1}B_{D2}(\lambda_{21}D_{2}D_{1}+x_{2}D_{1}-x_{1}D_{2})+\lambda_{12}a_{D1}B_{x1}D_{1}x_{1}+\lambda_{12}a_{D1}B_{x2}D_{1}x_{2}\\
    &+\lambda_{12}a_{D1}B_{k}D_{1}+\lambda_{12}a_{D2}B_{D1}D_{2}D_{1}+\lambda_{12}a_{D2}B_{D2}D_{2}^2+\lambda_{12}a_{D2}B_{x1}D_{2}x_{1}+\lambda_{12}a_{D2}B_{x2}D_{2}x_{2}\\    
    &+\lambda_{12}a_{D2}B_{k}D_{2}+\lambda_{12}a_{x1}B_{D1}x_{1}D_{1}+\lambda_{12}a_{x1}B_{D2}x_{1}D_{2}+\lambda_{12}a_{x1}B_{x1}x_{1}^2+\lambda_{12}a_{x1}B_{x2}x_{1}x_{2}\\
    &+\lambda_{12}a_{x1}B_{k}x_{1}+\lambda_{12}a_{x2}B_{D1}x_{2}D_{1}+\lambda_{12}a_{x2}B_{D2}x_{2}D_{2}+\lambda_{12}a_{x2}B_{x1}x_{2}x_{1}+\lambda_{12}a_{x2}B_{x2}x_{2}^2\\
    &+\lambda_{12}a_{x2}B_{k}x_{2}+\lambda_{12}a_{k}B_{D1}D_{1}+\lambda_{12}a_{k}B_{D2}D_{2}+\lambda_{12}a_{k}B_{x1}x_{1}+\lambda_{12}a_{k}B_{x2}x_{2}+\lambda_{12}a_{k}B_{k},\\
     &\lambda_{12}D_{1}\partial(D_{2})\\&=\lambda_{12}D_{1}(b_{D1}D_{1}+b_{D2}D_{2}+b_{x1}x_{1}+b_{x2}x_{2}+b_k)\\
    &=\lambda_{12}b_{D1}D_{1}^2+b_{D2}(\lambda_{21}D_{2}D_{1}+x_{2}D_{1}-x_{1}D_{2})+\lambda_{12}b_{x1}D_{1}x_{1}+\lambda_{12}b_{x2}D_{1}x_{2}+\lambda_{12}b_{k}D_{1},
    \end{align*}}
    
    {\tiny
\begin{align*}
     &\lambda_{21}\partial(D_{2})\sigma(D_{1})\\&=\lambda_{21}(b_{D1}b_{1}+b_{D2}D_{2}+b_{x1}x_{1}+b_{x2}x_{2}+b_k)(A_{D1}D_{1}+A_{D2}D_{2}+A_{x1}x_{1}+A_{x2}x_{2}+A_k)\\
    &=\lambda_{21}b_{D1}A_{D1}D_{1}^2+\lambda_{21}b_{D1}A_{D2}\lambda_{12}^{-1}(\lambda_{21}D_{2}D_{1}+x_{2}D_{1}-x_{1}D_{2})+\lambda_{21}b_{D1}A_{x1}D_{1}x_{1}+\lambda_{21}b_{D1}A_{x2}D_{1}x_{2}\\
    &+\lambda_{21}b_{D1}A_{k}D_{1}+\lambda_{21}b_{D2}A_{D1}D_{2}D_{1}+\lambda_{21}b_{D2}A_{D2}D_{2}^2+\lambda_{21}b_{D2}A_{x1}D_{2}x_{1}+\lambda_{21}b_{D2}A_{x2}D_{2}x_{2}\\    
    &+\lambda_{21}b_{D2}A_{k}D_{2}+\lambda_{21}b_{x1}A_{D1}x_{1}D_{1}+\lambda_{21}b_{x1}A_{D2}x_{1}D_{2}+\lambda_{21}b_{x1}A_{x1}x_{1}^2+\lambda_{21}b_{x1}A_{x2}x_{1}x_{2}\\
    &+\lambda_{21}b_{x1}A_{k}x_{1}+\lambda_{21}b_{x2}A_{D1}x_{2}D_{1}+\lambda_{21}b_{x2}A_{D2}x_{2}D_{2}+\lambda_{21}b_{x2}A_{x1}x_{2}x_{1}+\lambda_{21}b_{x2}A_{x2}x_{2}^2\\
    &+\lambda_{21}b_{x2}A_{k}x_{2}+\lambda_{21}b_{k}A_{D1}D_{1}+\lambda_{21}b_{k}A_{D2}D_{2}+\lambda_{21}b_{k}A_{x1}x_{1}+\lambda_{21}b_{k}A_{x2}x_{2}+\lambda_{21}b_{k}A_{k},\\\\
    &\lambda_{21}D_{2}\partial(D_{1})\\&=\lambda_{21}D_{2}(a_{D1}D_{1}+a_{D2}D_{2}+a_{x1}x_{1}+a_{x2}x_{2}+a_k)\\
    &=\lambda_{21}a_{D1}D_{2}D_{1}+\lambda_{21}a_{D2}D_{2}^2+\lambda_{21}a_{x1}D_{2}x_{1}+\lambda_{21}a_{x2}D_{2}x_{2}+\lambda_{21}a_{k}D_{2},\\\\
    &\partial(x_{2})\sigma(D_{1})\\&=(d_{D1}D_{1}+d_{D2}D_{2}+d_{x1}x_{1}+d_{x2}x_{2}+d_k)(A_{D1}D_{1}+A_{D2}D_{2}+A_{x1}x_{1}+A_{x2}x_{2}+A_k)\\
    &=d_{D1}A_{D1}D_{1}^2+d_{D1}A_{D2}\lambda_{12}^{-1}(\lambda_{21}D_{2}D_{1}+x_{2}D_{1}-x_{1}D_{2})+d_{D1}A_{x1}D_{1}x_{1}+d_{D1}A_{x2}D_{1}x_{2}+d_{D1}A_{k}D_{1}\\
    &+d_{D2}A_{D1}D_{2}D_{1}+d_{D2}A_{D2}D_{2}^2+d_{D2}A_{x1}D_{2}x_{1}+d_{D2}A_{x2}D_{2}x_{2}+d_{D2}A_{k}D_{2}\\    
    &+d_{x1}A_{D1}x_{1}D_{1}+d_{x1}A_{D2}x_{1}D_{2}+d_{x1}A_{x1}x_{1}^2+d_{x1}A_{x2}x_{1}x_{2}+d_{x1}A_{k}x_{1}\\
    &+B_{x2}A_{D1}x_{2}D_{1}+d_{x2}A_{D2}x_{2}D_{2}+d_{x2}A_{x1}x_{2}x_{1}+d_{x2}A_{x2}x_{2}^2+d_{x2}A_{k}x_{2}\\
    &+d_{k}A_{D1}D_{1}+d_{k}A_{D2}D_{2}+d_{k}A_{x1}x_{1}+d_{k}A_{x2}x_{2}+d_{k}A_{k},\\\\
    &x_{2}\partial(D_{1})\\&=x_{2}(a_{D1}D_{1}+a_{D2}D_{2}+a_{x1}x_{1}+a_{x2}x_{2}+a_k)\\
    &=a_{D1}x_{2}D_{1}+a_{D2}x_{2}D_{2}+a_{x1}x_{2}x_{1}+a_{x2}x_{2}^2+a_{k}x_{2},\\\\
    &\partial(x_{1})\sigma(D_{2})\\&=(c_{D1}D_{1}+c_{D2}D_{2}+c_{x1}x_{1}+c_{x2}x_{2}+c_k)(B_{D1}D_{1}+B_{D2}D_{2}+B_{x1}x_{1}+B_{x2}x_{2}+B_k)\\
    &=c_{D1}B_{D1}D_{1}^2+c_{D1}B_{D2}(\lambda_{21}D_{2}D_{1}+x_{2}D_{1}-x_{1}D_{2})+c_{D1}B_{x1}D_{1}x_{1}+c_{D1}B_{x2}D_{1}x_{2}+c_{D1}B_{k}D_{1}\\
    &+c_{D2}B_{D1}D_{2}D_{1}+c_{D2}B_{D2}D_{2}^2+c_{D2}B_{x1}D_{2}x_{1}+c_{D2}B_{x2}D_{2}x_{2}+c_{D2}B_{k}D_{2}\\    
    &+c_{x1}B_{D1}x_{1}D_{1}+c_{x1}B_{D2}x_{1}D_{2}+c_{x1}B_{x1}x_{1}^2+c_{x1}B_{x2}x_{1}x_{2}+c_{x1}B_{k}x_{1}\\
    &+c_{x2}B_{D1}x_{2}D_{1}+c_{x2}B_{D2}x_{2}D_{2}+c_{x2}B_{x1}x_{2}x_{1}+c_{x2}B_{x2}x_{2}^2+c_{x2}B_{k}x_{2}\\
    &+c_{k}B_{D1}D_{1}+c_{k}B_{D2}D_{2}+c_{k}B_{x1}x_{1}+c_{k}B_{x2}x_{2}+c_{k}B_{k},\\\\
    &x_{1}\partial(D_{2})\\&=x_{1}(b_{D1}D_{1}+b_{D2}D_{2}+b_{x1}x_{1}+b_{x2}x_{2}+b_k)\\
    &=b_{D1}x_{1}D_{1}+b_{D2}x_{1}D_{2}+b_{x1}x_{1}^2+b_{x2}x_{1}x_{2}+b_{k}x_{1}.
\end{align*}}

With these previous terms, we obtain the following proposition.

\begin{proposition}
\label{teon2}
If $\sigma:\mathcal{D}\rightarrow\mathcal{D}$ is an automorphism defined as in {\rm(\ref{aut})} such that ${\rm span}_{\mathbb{K}}(S,H)={\rm span}_{\mathbb{K}}(L_1,L_2)$, and $\partial:\mathcal{D}\rightarrow\mathcal{D}$ is a $\sigma$-derivation as in {\rm(\ref{deri})}, then the elements $\partial(D_{1}),\ \partial(D_{2}),\ \partial(x_{1})$ and $\partial(x_{2})$ have no zero degree terms, where 
{\small\begin{equation*}
    L:= \left(\begin{tabular}{cccc}
 $0$ & $\lambda_{12}$ &$A_{D1}$&$B_{D1}$\\
$-\lambda_{21}$ & $0$ & $A_{D2}$ & $B_{D2}$\\
$0$&$1$&$A_{x1}$&$B_{x1}$\\
$1$&$0$&$A_{x2}$&$B_{x2}$
    \end{tabular}\right),
\end{equation*}

\begin{equation*}
    L_1: =\left(\begin{tabular}{c}
 $0$ \\
$-\lambda_{21}$ \\
$0$\\
$1$
    \end{tabular}\right),\ \ 
     L_2: =\left(\begin{tabular}{c}
$\lambda_{12}$ \\
 $0$ \\
$1$\\
$0$
    \end{tabular}\right),\ \ 
S: =\left(\begin{tabular}{c}
 $S_{D1}$ \\
$S_{D2}$ \\
$S_{x1}$\\
$S_{x2}$
    \end{tabular}\right),\ \ 
H: =\left(\begin{tabular}{c}
 $H_{D1}$ \\
$H_{D2}$ \\
$H_{x1}$\\
$H_{x2}$
    \end{tabular}\right).
\end{equation*}}
\end{proposition}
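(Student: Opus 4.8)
The plan is to apply the $\sigma$-derivation $\partial$ to the single defining relation (\ref{relations}) for $n=2$, namely $\lambda_{12}D_1D_2-\lambda_{21}D_2D_1-x_2D_1+x_1D_2=0$, and to pin down the constant terms $a_k,b_k,c_k,d_k$ of $\partial(D_1),\partial(D_2),\partial(x_1),\partial(x_2)$ by isolating the homogeneous degree-one component of the resulting identity. Since (\ref{relations}) is homogeneous of degree two (each $D_i$ and $x_i$ has degree one), the algebra $\mathcal{D}$ is graded, so $\partial(\text{relation})=0$ forces each graded component to vanish on its own; and because $\sigma$ carries no zero-degree term by Proposition \ref{determinante}, the only way a degree-one monomial can be produced among the eight products displayed before the statement is by multiplying the constant part of one $\partial$-value against the linear part of a $\sigma$-value, or against a bare generator.

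First I would collect, for each of the eight products $\lambda_{12}\partial(D_1)\sigma(D_2),\ \lambda_{12}D_1\partial(D_2),\dots,x_1\partial(D_2)$, precisely these degree-one contributions: the constants $a_k,b_k,c_k,d_k$ multiply the coefficient vectors $A=(A_{D1},A_{D2},A_{x1},A_{x2})^{\mathsf T}$ of $\sigma(D_1)$ and $B=(B_{D1},B_{D2},B_{x1},B_{x2})^{\mathsf T}$ of $\sigma(D_2)$, together with the bare generators $D_1,D_2,x_1,x_2$. Setting to zero the coefficients of $D_1,D_2,x_1,x_2$ yields four scalar equations that are linear and homogeneous in $a_k,b_k,c_k,d_k$. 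Introducing $u=c_k+\lambda_{12}a_k$ and $v=-(\lambda_{21}b_k+d_k)$, I would regroup these into the single homogeneous system $vA+uB+b_kL_2+a_kL_1=0$, that is $L\,(v,u,b_k,a_k)^{\mathsf T}=0$ with $L$ the matrix of the statement; this regrouping, gathering the $A$- and $B$-parts and recognizing the residual terms as the columns $L_1,L_2$, is the bookkeeping step.

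The conceptual heart is then a short linear-algebra argument that $\det(L)\neq0$. By Proposition \ref{propdet} the four coefficient vectors $A,B,S,H$ of $\sigma$ are linearly independent, hence a basis of $\mathbb{K}^4$, so in particular $\mathbb{K}^4={\rm span}_{\mathbb{K}}(A,B)\oplus{\rm span}_{\mathbb{K}}(S,H)$. The hypothesis ${\rm span}_{\mathbb{K}}(S,H)={\rm span}_{\mathbb{K}}(L_1,L_2)$ lets me replace the second summand, so that $\{A,B,L_1,L_2\}$ is again a basis and therefore $\det(L)\neq0$. Consequently the homogeneous system has only the trivial solution $v=u=b_k=a_k=0$, whence $a_k=b_k=0$, and then $c_k=u-\lambda_{12}a_k=0$ and $d_k=-v-\lambda_{21}b_k=0$; that is, none of $\partial(D_1),\partial(D_2),\partial(x_1),\partial(x_2)$ has a zero-degree term, as claimed.

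The main obstacle I anticipate is not the determinant computation, which is clean once the system is assembled, but the careful extraction and regrouping of the degree-one terms from the eight products, keeping every sign straight so that the leftover vectors are correctly identified with the columns $L_1$ and $L_2$; a single sign slip there would disguise the matrix $L$ and break the identification with Proposition \ref{propdet}. A secondary point worth checking is that no additional degree-one terms sneak in when rewriting products through (\ref{relations}) or through the centrality of the $x_i$, which is exactly what the degree-two homogeneity of all those reductions guarantees.
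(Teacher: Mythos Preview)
Your approach is essentially the same as the paper's: apply $\partial$ to the defining relation, isolate the degree-one component (using $A_k=B_k=0$ from Proposition~\ref{determinante}), and show the resulting $4\times 4$ homogeneous system in the constants is nonsingular via the span hypothesis. Your substitution $u=c_k+\lambda_{12}a_k$, $v=-(\lambda_{21}b_k+d_k)$ is a clean shortcut that replaces the paper's column-operation computation $\det(\Theta)=-\det(L)$; just note that since the columns of $L$ are $L_1,L_2,A,B$ in that order, the system reads $L\,(a_k,b_k,v,u)^{\mathsf T}=0$ rather than $L\,(v,u,b_k,a_k)^{\mathsf T}=0$---exactly the bookkeeping slip you anticipated, and harmless for the conclusion.
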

\begin{proof}
From the one degree terms of $\partial$ applied to the relation (\ref{relations}), we obtain the equations:
{\tiny \begin{align*}
        D_{1}:\ 
        & \lambda_{12}a_{D1}B_{k}+\lambda_{12}a_{k}B_{D1}+\lambda_{12}b_{k}-\lambda_{21}B_{D1}A_{k}-\lambda_{21}b_{k}A_{D1}-d_{D1}A_{k}-d_{k}A_{D1}+c_{D1}B_{k}+c_{k}B_{D1}=0\\
        D_{2}:\ &\lambda_{12}a_{D2}B_{k}+\lambda_{12}a_{k}B_{D2}-\lambda_{21}b_{k}A_{D2}-\lambda_{21}b_{D2}A_{k}-\lambda_{21}a_{k}-d_{D2}A_{k}-d_{k}A_{D2}+c_{D2}B_{k}+c_{k}B_{D2}=0\\
        x_{1}:\  &\lambda_{12}a_{x1}B_{k}+\lambda_{12}a_{k}B_{x1}-\lambda_{21}b_{k}A_{x1}-\lambda_{21}b_{x1}A_{k}-d_{x1}A_{k}-d_{k}A_{x1}+c_{x1}B_{k}+c_{k}B_{x1}+b_k=0 \\
        x_{2}:\  &\lambda_{12}a_{x2}B_{k}+\lambda_{12}a_{k}B_{x2}-\lambda_{21}b_{k}A_{x2}-\lambda_{21}b_{x2}A_{k}-d_{x2}A_{k}-d_{k}A_{x2}+c_{x2}B_{k}+c_{k}B_{x2}+a_k=0
\end{align*}}

Now, since $\text{det}(A)\not=0$, then Proposition  \ref{determinante} implies that $A_k=B_k=0$. In this way, the previous equations can be expressed as 
\begin{align*}
        D_{1}&:\ \ \ \lambda_{12}a_{k}B_{D1}-\lambda_{21}b_{k}A_{D1}-d_{k}A_{D1}+c_{k}B_{D1}=-\lambda_{12}b_{k},\\
        D_{2}&:\ \ \ \lambda_{12}a_{k}B_{D2}-\lambda_{21}b_{k}A_{D2}-d_{k}A_{D2}+c_{k}B_{D2}=\lambda_{21}a_{k},\\
        x_{1}&:\ \ \ \lambda_{12}a_{k}B_{x1}-\lambda_{21}b_{k}A_{x1}-d_{k}A_{x1}+c_{k}B_{x1}=-b_k,\\
        x_{2}&:\ \ \ \lambda_{12}a_{k}B_{x2}-\lambda_{21}b_{k}A_{x2}-d_{k}A_{x2}+c_{k}B_{x2}=-a_k,
\end{align*}

which is a linear system of equations in the variables $a_k,b_k,c_k$ and $d_k$, i.e., it is a system $\Theta\overline{y}=0$, where
\begin{equation*}
    \Theta=\left(\begin{tabular}{cccc}
 $\lambda_{12}B_{D1}$ & $\lambda_{12}-\lambda_{21}A_{D1}$ &$-A_{D1}$&$B_{D1}$\\
$\lambda_{12}B_{D2}-\lambda_{21}$ & $-\lambda_{21}A_{D2}$ & $-A_{D2}$ & $B_{D2}$\\
$\lambda_{12}B_{x1}$&$1-\lambda_{21}A_{x1}$&$-A_{x1}$&$B_{x1}$\\
$\lambda_{12}B_{x2}+1$&$-\lambda_{21}A_{x2}$&$-A_{x2}$&$B_{x2}$
    \end{tabular}\right),\ \ \ \overline{y}=\left(\begin{tabular}{c}
         $a_k$\\
         $b_k$ \\
          $d_k$\\
          $c_k$
    \end{tabular}\right).
\end{equation*}

It is clear that 

\begin{equation*}
\begin{split}
    \text{det}(\Theta)&=\text{det}\left(\begin{tabular}{cccc}
 $\lambda_{12}B_{D1}$ & $\lambda_{12}-\lambda_{21}A_{D1}$ &$-A_{D1}$&$B_{D1}$\\
$\lambda_{12}B_{D2}$ & $-\lambda_{21}A_{D2}$ & $-A_{D2}$ & $B_{D2}$\\
$\lambda_{12}B_{x1}$&$1-\lambda_{21}A_{x1}$&$-A_{x1}$&$B_{x1}$\\
$\lambda_{12}B_{x2}$&$-\lambda_{21}A_{x2}$&$-A_{x2}$&$B_{x2}$
    \end{tabular}\right)\\&+\text{det}\left(\begin{tabular}{cccc}
 $0$ & $\lambda_{12}-\lambda_{21}A_{D1}$ &$-A_{D1}$&$B_{D1}$\\
$-\lambda_{21}$ & $-\lambda_{21}A_{D2}$ & $-A_{D2}$ & $B_{D2}$\\
$0$&$1-\lambda_{21}A_{x1}$&$-A_{x1}$&$B_{x1}$\\
$1$&$-\lambda_{21}A_{x2}$&$-A_{x2}$&$B_{x2}$
    \end{tabular}\right)\\
    &=\text{det}\left(\begin{tabular}{cccc}
 $0$ & $\lambda_{12}-\lambda_{21}A_{D1}$ &$-A_{D1}$&$B_{D1}$\\
$-\lambda_{21}$ & $-\lambda_{21}A_{D2}$ & $-A_{D2}$ & $B_{D2}$\\
$0$&$1-\lambda_{21}A_{x1}$&$-A_{x1}$&$B_{x1}$\\
$1$&$-\lambda_{21}A_{x2}$&$-A_{x2}$&$B_{x2}$
    \end{tabular}\right)
\end{split}
\end{equation*}
\begin{equation*}
    \begin{split}
    &=\text{det}\left(\begin{tabular}{cccc}
 $0$ & $-\lambda_{21}A_{D1}$ &$-A_{D1}$&$B_{D1}$\\
$-\lambda_{21}$ & $-\lambda_{21}A_{D2}$ & $-A_{D2}$ & $B_{D2}$\\
$0$&$-\lambda_{21}A_{x1}$&$-A_{x1}$&$B_{x1}$\\
$1$&$-\lambda_{21}A_{x2}$&$-A_{x2}$&$B_{x2}$
    \end{tabular}\right)\\
    &+\text{det}\left(\begin{tabular}{cccc}
 $0$ & $\lambda_{12}$ &$-A_{D1}$&$B_{D1}$\\
$-\lambda_{21}$ & $0$ & $-A_{D2}$ & $B_{D2}$\\
$0$&$1$&$-A_{x1}$&$B_{x1}$\\
$1$&$0$&$-A_{x2}$&$B_{x2}$
    \end{tabular}\right)\\
    &=\text{det}\left(\begin{tabular}{cccc}
 $0$ & $\lambda_{12}$ &$-A_{D1}$&$B_{D1}$\\
$-\lambda_{21}$ & $0$ & $-A_{D2}$ & $B_{D2}$\\
$0$&$1$&$-A_{x1}$&$B_{x1}$\\
$1$&$0$&$-A_{x2}$&$B_{x2}$
    \end{tabular}\right)=-\text{det}(L)
\end{split}
    \end{equation*}
    
Due to the fact that $\text{span}_{\mathbb{K}}(S,H)=\text{span}_{\mathbb{K}}(L_1,L_2)$, we have that the matrices $A^T$ and $L^T$ are row equivalent, and therefore $\text{det}(L)\not=0$, because $\text{det}(A)\not=0$. Since $\text{det}(\Theta)=-\text{det}(L)$, $\text{det}(\Theta)\not=0$ which implies that the unique solution to the homogeneous linear system $\Theta\overline{y}=0$ is the trivial solution $\overline{y}=0$, i.e., $a_k=b_k=c_k=d_k=0$.
\end{proof}

From results above, it follows the next assertion.

\begin{corollary}\label{impo}
If $\sigma:\mathcal{D}\rightarrow\mathcal{D}$ is an automorphism defined as in {\rm (\ref{aut})} such that the linear components satisfy  ${\rm span}_{\mathbb{K}}(S,H)={\rm span}_{\mathbb{K}}(L_1,L_2)$ as in  {\rm Proposition \ref{teon2}}, and if $\partial:\mathcal{D}\rightarrow\mathcal{D}$ is a $\sigma$-derivation, then $\partial(\mathcal{D})\cap \mathbb{K}=\emptyset$.
\end{corollary}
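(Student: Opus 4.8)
The plan is to read the conclusion as the assertion that the image of $\partial$ contains no \emph{nonzero} scalar, i.e. $\partial(\mathcal{D})\cap\mathbb{K}=\{0\}$ (the literal intersection always contains $0=\partial(0)$, so this is the only sensible reading), and to derive it from the fact that both $\sigma$ and $\partial$ turn out to be \emph{graded of degree zero}. First I would record that $\mathcal{D}$ is a connected $\mathbb{N}$-graded algebra in which every generator sits in degree one: the defining relation (\ref{relations}) is homogeneous of degree two, and so are the centrality relations among the $x_i$'s, so that $\mathcal{D}=\bigoplus_{m\geq 0}\mathcal{D}_m$ with $\mathcal{D}_0=\mathbb{K}$ and $\mathcal{D}_m\cap\mathbb{K}=0$ for every $m\geq 1$.

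Next I would use the two previous propositions to locate $\sigma$ and $\partial$ on generators. By Proposition \ref{determinante} we have $A_k=B_k=S_k=H_k=0$, so each of $\sigma(D_1),\sigma(D_2),\sigma(x_1),\sigma(x_2)$ in (\ref{aut}) is homogeneous of degree one; hence $\sigma$ is a graded automorphism. Likewise, the hypothesis ${\rm span}_{\mathbb{K}}(S,H)={\rm span}_{\mathbb{K}}(L_1,L_2)$ places us in the situation of Proposition \ref{teon2}, which forces $a_k=b_k=c_k=d_k=0$ in (\ref{deri}); thus each of $\partial(D_1),\partial(D_2),\partial(x_1),\partial(x_2)$ is also homogeneous of degree one. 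With both $\sigma$ and $\partial$ degree-preserving on generators, the twisted Leibniz rule $\partial(uv)=\partial(u)\sigma(v)+u\partial(v)$ propagates this to all of $\mathcal{D}$: if $u\in\mathcal{D}_p$ and $v\in\mathcal{D}_q$, then $\partial(u)\sigma(v)\in\mathcal{D}_{p+q}$ and $u\partial(v)\in\mathcal{D}_{p+q}$, so an induction on the length of monomials yields $\partial(\mathcal{D}_m)\subseteq\mathcal{D}_m$ for every $m$.

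Finally I would conclude. Since $\sigma(1)=1$, the Leibniz rule applied to $1=1\cdot 1$ gives $\partial(1)=2\partial(1)$, hence $\partial(1)=0$, and $\mathbb{K}$-linearity gives $\partial|_{\mathcal{D}_0}=0$. For an arbitrary $a=\sum_{m}a_m$ with $a_m\in\mathcal{D}_m$ we then have $\partial(a)=\sum_{m\geq 1}\partial(a_m)\in\bigoplus_{m\geq 1}\mathcal{D}_m$, a subspace meeting $\mathbb{K}=\mathcal{D}_0$ only in $0$; therefore $\partial(a)\in\mathbb{K}$ forces $\partial(a)=0$, which is exactly $\partial(\mathcal{D})\cap\mathbb{K}=\{0\}$. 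The genuinely substantive content lives in Propositions \ref{determinante} and \ref{teon2}, namely the vanishing of the constant terms of $\sigma$ and of $\partial$, so the only real care needed here is the last packaging step: one must check that $\sigma$ introduces no lower-degree contribution into the Leibniz rule, which is precisely what $A_k=B_k=S_k=H_k=0$ guarantees. I expect this degree bookkeeping to be the main (mild) obstacle, together with the need to flag the harmless abuse of notation $\emptyset$ versus $\{0\}$.
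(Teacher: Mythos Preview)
Your proof is correct and matches the paper's approach: the paper gives no explicit argument beyond ``From results above, it follows the next assertion,'' so the real content is Propositions~\ref{determinante} and~\ref{teon2}, and you have supplied the grading/Leibniz packaging that the paper leaves to the reader. Your reading of $\emptyset$ as $\{0\}$ is the right one, and your tacit assumption that $\partial$ has the shape~(\ref{deri}) is the same one the paper is making.
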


By Corollary \ref{impo}, we cannot guarantee the density conditions (\cite{AB18}, p. 6), which, as we have seen, we want for the construction of the differential calculus used in Theorem \ref{Theorem3d} and Theorem \ref{mostimportant}. Therefore, when we work in diffusion algebras type 2 with 2 generators, we prefer to choose skew derivations of graded automorphism such that ${\rm span}_{\mathbb{K}}(S,H)\not={\rm span}_{\mathbb{K}}(L_1,L_2)$.


%
%

\section{Conclusions and future work}

In this article, we have studied the differential smoothness of several algebras, and we have realized that the GK dimension is a very important notion to guarantee the non-existence of the differential calculus on the algebras. It is a pending task to characterize the differential smoothness of 3-dimensional skew polynomial algebras type $5(e)$. 

Now, since the 3-dimensional skew polynomial and diffusion algebras are part of more general families of PBW noncommutative structures (see for example \cite{LFGRSV}, \cite{ReyesRodriguez2021}, and \cite{ReyesSuarez2021}) for which its GK dimension has been computed explicitly in \cite{LFGRSV}, a possible future work is to study differential smoothness in this general context. For example, in the setting of skew polynomial rings considered by Artamonov et al., \cite{Artamonovetal2016}, which are examples of the PBW algebras studied in \cite{LFGRSV}, we can see that the coefficients do not commute with the variables, as they do with the algebras studied here, so new calculations will have to be developed. Surely, the derivations of these structures following Artamonov's ideas \cite{Artamonov2015}, and the automorphisms of the objects characterized by Venegas \cite{Venegas2015} will have to considered.

%
%



\end{document}